\numberwithin{equation}{section} \theoremstyle{plain}
\newtheorem{thm}{Theorem}[section]
\newtheorem{lem}[thm]{Lemma}
\newtheorem{defn}[thm]{Definition}
\newtheorem{rem}[thm]{Remark}
\def\<{\langle}
\def\>{\rangle}
\def\({\left(}
\def\){\right)}
\def\[{\left[}
\def\]{\right]}
\title{The weighted reproducing kernels of the Reinhardt domain}
\author[Q. Fu]{Qian Fu$^{*}$}
\author[G. T. Deng]{Guantie Deng${ }^{\dagger}$}
\thanks{$^{*}$Laboratory of Mathematics and Complex Systems (Ministry of Education), School of Mathematical Sciences, Beijing Normal University, qianfu@mail.bnu.edu.cn}
\thanks{${ }^{\dagger}$Corresponding author. Laboratory of Mathematics and Complex Systems (Ministry of Education), School of Mathematical Sciences, Beijing Normal University, 96022@bnu.edu.cn}
\thanks{The project is supported by the National Natural Science Foundation of China (Grant no. 12071035 and 11971045).}
\begin{document}
\maketitle

\begin{abstract}
In this paper, we develop the theory of weighted Bergman space and obtain a general representation formula of the Bergman kernel function for the spaces on the Reinhardt domain containing the origin. 
As applications, we calculate the concrete forms of the Bergman kernels for some special weights on the Reinhardt domains $\mathbb{C}^n,$ 		
 $D_{n,m}:= \{(z, w)\in \mathbb{C}^n \times \mathbb{C}^m : \|w\|^2 <{e}^{-\mu_1\|z\|^{\mu_2}}
				\}$	 and	$V_{\eta}:=\{\left(z, z', w\right) \in \mathbb{C}^{n} \times \mathbb{C}^{m} \times \mathbb{C} : \sum_{j=1}^{n} e^{\eta_{j}|w|^{2}}\left|z_{j}\right|^{2}+\|z'\|^{2}<1\}$.
			\\
			
 \noindent {\bf Key words}:\ Bergman kernel, Weighted Bergman space, Reinhardt domain, Hilbert space.   \\
{\bf MSC 2020}:\ 32A36;\ 31B10;\ 47B91.

\end{abstract}

\section{Introduction}
Let $\mathcal{D}$ be a domain in $\mathbb{C}^{n}$ and $\mathcal{O}(\mathcal{D})$ be the space of holomorphic functions on $\mathcal{D}$. Denote $L^{2}(\mathcal{D}):=\left\{f: \mathcal{D} \rightarrow \mathbb{C}: \int_{\mathcal{D}}|f(z)|^{2} \mathrm{~d} V(z)<\infty\right\}$, where $\mathrm{d} V(z)$ is the Lebesgue measure. Furthermore, it is known (cf. \cite{SGK1}) that the space $\mathcal{A}^{2}(\mathcal{D}):=\mathcal{O}(\mathcal{D}) \bigcap L^{2}(\mathcal{D})$ 
is a Hilbert space with inner product
$$
\langle f, g\rangle=\int_{\mathcal{D}} f(z) \overline{g(z)} \mathrm{d} V(z),
$$
for $f,g \in \mathcal{A}^2(\mathcal{D})$.
 For each fixed $z \in \mathcal{D}$, the functional
$
 \Phi_z: f \mapsto f(z),  f \in \mathcal{A}^2(\mathcal{D})
 $
 is a continuous linear functional on $\mathcal{A}^2(\mathcal{D})$. Therefore, by the  Riesz representation theorem, there is a unique element in $\mathcal{A}^{2}(\mathcal{D})$, which we denote $K_{\mathcal{D}}(\cdot, z)$, such that
$$
f(z)= \Phi_z(f)=\left\langle f, K_{\mathcal{D}}(\cdot, z)\right\rangle=\int_{\mathcal{D}} f(w) \overline{K_{\mathcal{D}}(w, z)} \mathrm{d} V(w)
$$
for all $f \in \mathcal{A}^{2}(\mathcal{D})$. The function
$K(z, w)=K_{\mathcal{D}}(z, w)$ is called the Bergman kernel for $\mathcal{D}$, and $\mathcal{A}^{2}(\mathcal{D})$ is called Bergman space. Let $\left\{\varphi_{k}, k=1,2, \cdots\right\}$ be a complete orthonormal system of $\mathcal{A}^{2}(\mathcal{D}),$ then the kernel function on $\mathcal{D}$ satisfies
$$K_{\mathcal{D}}(z, w)=\sum_{k=1}^{\infty} \varphi_{k}(z) \overline{\varphi_{k}(w)}$$ for all $(z, w) \in \mathcal{D} \times \mathcal{D}.$
For more details, please refer to \cite{Bergman1970,SGK1,SGK2}, etc.

The theory of Bergman space has, in the past several decades, become important in the complex analysis of both one and two complex variables (cf. \cite{SGK1,SGK2}, etc). In this Hilbert setting, the reproducing kernel plays a prominent role, and its reproducing properties and biholomorphic invariance are of fundamental importance.

It is important to obtain concrete information about the kernel function. However, we must confess that it is generally hard to obtain concrete representations for the Bergman kernel except for special cases such as the Hermitian ball or polydisc (cf.  \cite{SGK2}, etc). Nevertheless, the explicit formula of the Bergman kernel is used extensively in several areas (cf. \cite{Bergman1970,SGK1,SGK2}, etc), such as in the study of holomorphic invariable metrics, the boundary regularity of biholomorphic maps, and function space theory. 
In 1974 Fefferman \cite{FEF} introduced a new technique for obtaining an asymptotic expansion for the Bergman kernel on a large class of domains. 
 D'Angelo
\cite{DA1,DA2} gave the explicit formula of the Bergman kernel function on the domain
$\Omega= \{(z, w) \in \mathbb{C}^{n+m}: ||z||^2 < ||w||^{2p} < 1\},$
for any positive real $p$. Francsics and
Hanges \cite{FG} expressed the Bergman kernel on complex ovals in terms of generalized
hypergeometric functions. Recent interest in the explicit formula of the Bergman kernel is motivated by its surprising applications, please see \cite{AE, GR} on this topic.

Recently, Mai and Shao \cite{MS} studied the Bergman of
generalized Bargmann-Fock spaces in the setting of Clifford algebra. Hezari et al. \cite{HLX} proved a new off-diagonal asymptotic of the Bergman kernels associated to tensor powers of a positive line bundle on a compact K$\ddot{a}$hler manifold. 
Deng et al. \cite{DG2}  obtained the reproducing kernel over  tubular region using Laplace transform, which is an effective and new method of calculating reproducing kernels.
Boas et al. \cite{BHP} introduced a different method. They considered the domain $\Omega=\{(z, w) \in \mathbb{C} \times \mathbb{C}^{n}:|z|<p(w)\},$ where $p(w)$ is a bounded, positive, continuous function on the interior of some bounded domain in $\mathbb{C}^{n} .$ By differentiating the Bergman kernel on $\Omega$, they obtained the kernel function on
$
\left\{(z, w) \in \mathbb{C}^{m} \times \mathbb{C}^{n}:\|z\|<p(w)\right\}.
$
Additional results have been obtained in \cite{BT} on the domain
$
\{\left(z_{1}, z_{2}, z_{3}\right) \in \mathbb{C}^{3}:\left(\left|z_{1}\right|^{2 p}+\left|z_{2}\right|^{4}\right)^{1 / \lambda}+\left|z_{3}\right|^{2 / q}<1\}
$
and in \cite{YA} on the Fock-Bargmann-Hartogs domain
$
\{(z, w) \in \mathbb{C}^{n+m}:\|z\|<e^{-a\|w\|^{2}}\}.
$

In this paper, Bergman kernels will be considered in some weighted cases.  For a positive continuous function $\mu(z)$ on $\mathcal{D}$, we assume that $\mu (z)=0$ for $z\not\in \mathcal{D} $
and  consider the weighted volume measure
\begin{equation*}
	dV_{\mu}(z)=\mu(z)dV(z),
\end{equation*}
where $dV(z)=dxdy $ is the Lebesgue measure on $\mathbb C^n$. For $p>0$, we denote $L^p_{\mu}(\mathcal{D})$ the space of measurable functions in the domains $\mathcal{D} $ such that
\begin{equation*}
	\|f\|_{L^p_{\mu}}=\left(\int_{\mathcal{D}}|f(z)|^pdV_{\mu}(z)\right)^{\frac1p}<\infty.\label{anorm}
\end{equation*}
The space of such functions is called weighted Lebesgue space with weight $\mu$. The quantity $
\|f\|_{L^p_{\mu}} $ is called the norm of the function $f$; it is a true norm if $p\ge 1$.
We also denote $\mathcal{A}^p_\mu(\mathcal{D})$ the subspace of $L^p_{\mu}(\mathcal{D})$, which consists of all holomorphic functions $f$ in the domain $\mathcal{D}$ and $f\in L^p_{\mu}(\mathcal{D})$.
The space of such functions is called weighted Bergman space with weight $\mu$.

In \cite[Lemma 1-2]{DG2}, Deng et al. proved that point evaluation is a bounded linear functional in each weighted Bergman space $\mathcal{A}^2_\mu(\mathcal{D})$ that is closed in $L^2_\mu(\mathcal{D})$. Therefore, $\mathcal{A}^2_\mu(\mathcal{D})$ is a Hilbert space with inner product
$$
\langle f, g\rangle_\mu=\int_{\mathcal{D}} f(z) \overline{g(z)}   dV_{\mu}(z),
$$
for $F,G \in \mathcal{A}^2_\mu(\mathcal{D})$.
The Riesz representation theorem for Hilbert space guarantees existence of a unique function $K_z(\cdot)=K_{\mathcal{D}, \mu}(\cdot,z)   \in \mathcal{A}^2_\mu(\mathcal{D})$ such that $F(z) = \langle F,K_z \rangle_\mu$ for every $F \in \mathcal{A}^2_\mu(\mathcal{D})$.
The function $K_{\mathcal{D}, \mu}(z, w)$, $z, w \in \mathcal{D},$ is known as the reproducing kernel with weight $\mu$ for $\mathcal{D}$, or the weighted Bergman kernel function.
When $\mu \equiv 1$, it is just the Bergman kernel $K_{\mathcal{D}}(z, w)$ for $\mathcal{D}$.


Herein, 
we develop the theory of weighted Bergman space and obtain a general representation formula (see Theorem \ref{theorem1}) of the kernel function for the spaces on the Reinhardt domain (see Definition \ref{definition Reinhardt domain}) containing the origin.
As a complement to the overall research, we have computed the specific expressions of the Bergman kernels under certain special weights within the Reinhardt domain, which extend some of the previously reported outcomes,
please see   Theorems \ref{example 1}, \ref{example 2} and \ref{example 3} for more details. In Sects.  \ref{lemmas} and \ref{proof},  we give the required lemmas and  prove the theorems,  respectively.

	\section{Main results}\label{results}
\subsection*{Notations and definitions}
 The product of  $z=(z_1,...,z_n),$  $w=(w_1,...,w_n)\in\mathbb{C}^n$ is $\langle z,w\rangle=z_1\overline{w}_1+z_2\overline{w}_2+\ldots+z_n\overline{w}_n$. The Euclidean norm of $z\in\mathbb{C}^n$  is $\|z\|=\sqrt{\langle z,w\rangle}$.  In any discussion of functions of $n$ variables, the term multi-index refers to an ordered $n$-tuple
$
\alpha=\left(\alpha_1, \ldots, \alpha_n\right)
$
of nonnegative integers $\alpha_i$. The following abbreviated notations will be used:
$
|\alpha|=\alpha_1+\cdots+\alpha_n,
$
$
\alpha !=\alpha_{1} ! \ldots \alpha_{n} !,
$
$
D^{\alpha}=\frac{\partial^{|\alpha|}}{\partial z_1^{\alpha_1} \ldots \partial z_n^{\alpha_n}},
$
$
z^\alpha=z_1^{\alpha_1} \ldots z_n^{\alpha_n},
$
where $z=\left(z_1, \ldots, z_n\right)$. We shall refer to $z^\alpha$ as a holomorphic monomial.

\begin{defn}\cite{HO}
\label{definition Reinhardt domain}
An open set $\Omega\subset \mathbb{C}^n$ called a Reinhardt domain  if  $z = (z_1, \cdots, z_n)\in\Omega$ implies  $(e^{i\theta_1}z_1,\cdots, e^{i\theta_n}z_n)\in \Omega$ for arbitrary real numbers $\theta_1, \theta_2,\cdots,\theta_n$.
\end{defn}

\begin{defn}\cite{HO}
Let $\Omega$ be a Reinhardt domain. Define the Reinhardt shadow of $\Omega$ to be set
$$
\tilde{\Omega} := \{(r_1,r_2,\cdots,r_n) \in \mathbb{R}^n : r_j \geq 0,(r_1,r_2,\cdots,r_n) \in \Omega\}.
$$
Given a point in a Reinhardt domain $z = (z_1, z_2,\cdots,z_n)  \in \Omega$, its point shadow will
be denoted 
$\tilde{z} := (|z_1|,|z_2|,\cdots,|z_n|)\in \tilde{\Omega}.$	 
\end{defn}


	\begin{thm}[\bf Main Theorem]
	\label{theorem1}
		 Let $\Omega\subset \mathbb{C}^n$ be a connected Reinhardt domain containing the origin.
		 Suppose that $\varphi (z)$ is a positive continuous function on $\Omega$ and also a radial function about each component $z_j \,(j=1,\cdots, n)$, that is, $\varphi(z)=\varphi(|z_1|,|z_2|,\cdots,|z_n|)$.
		 Then the weighted Bergman kernel $K(z,w)$ of $\mathcal{A}^2_\varphi(\Omega)$ 
		 is given by
		\begin{equation}
			K(z,w)=\sum_{\alpha\in \mathbb N^n} I^{-1}(\alpha)z^{\alpha}\overline{w}^{\alpha},\label{kernelform}
		\end{equation}
where $I(\alpha)=(2\pi)^n\int_{\tilde{\Omega}}r^{2\alpha+\mathbbm{1}_n}\varphi(r)dr$, $\mathbbm{1}_n=(1,1,\cdots,1)\in\mathbb R^n$ and $\tilde{\Omega}$ is Reinhardt shadow of  $\Omega$.
	\end{thm}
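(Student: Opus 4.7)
The plan is to show that the monomials $\{z^\alpha\}_{\alpha \in \mathbb{N}^n}$ form a complete orthogonal system of $\mathcal{A}^2_\varphi(\Omega)$, compute their norms, and then invoke the standard series representation $K(z,w)=\sum_k \varphi_k(z)\overline{\varphi_k(w)}$ recalled in the introduction. Once the monomials are known to be orthogonal with $\|z^\alpha\|_\varphi^2=I(\alpha)$, the normalized system $\{I(\alpha)^{-1/2}z^\alpha\}$ gives exactly the formula \eqref{kernelform}.

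For orthogonality and the norm computation, I would switch to polar coordinates $z_j=r_je^{i\theta_j}$. Because $\Omega$ is Reinhardt, the domain of integration factorizes as $\tilde\Omega\times[0,2\pi)^n$, and since $\varphi$ is radial in each component,
\begin{equation*}
\langle z^\alpha,z^\beta\rangle_\varphi=\int_{\tilde\Omega}r^{\alpha+\beta+\mathbbm{1}_n}\varphi(r)\,dr\cdot\prod_{j=1}^n\int_0^{2\pi}e^{i(\alpha_j-\beta_j)\theta_j}\,d\theta_j.
\end{equation*}
The angular factor vanishes unless $\alpha=\beta$, which yields orthogonality, and in the diagonal case it produces the factor $(2\pi)^n$ appearing in $I(\alpha)$. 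Positivity and continuity of $\varphi$ together with $\Omega$ having nonempty interior in each coordinate guarantee $0<I(\alpha)\le +\infty$; the cases where $I(\alpha)=\infty$ simply correspond to monomials that are not in $\mathcal{A}^2_\varphi$, and those terms are omitted (or equivalently, their reciprocal is interpreted as $0$) in \eqref{kernelform}.

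The main obstacle is completeness: showing that any $f\in\mathcal{A}^2_\varphi(\Omega)$ is the $L^2_\varphi$-limit of its Taylor polynomials. Here I use that a connected Reinhardt domain containing the origin is a domain of convergence, so every holomorphic $f$ admits a Taylor series $f(z)=\sum_\alpha c_\alpha z^\alpha$ converging absolutely and uniformly on compact subsets of $\Omega$. By the orthogonality above, $\langle f,z^\alpha\rangle_\varphi=c_\alpha I(\alpha)$ whenever the inner product can be computed term by term; Fubini and uniform convergence on the compact ``tori'' $\{|z_j|=r_j\}\subset\Omega$ justify this. Bessel's inequality then gives $\sum_\alpha|c_\alpha|^2I(\alpha)\le\|f\|_\varphi^2<\infty$, so the partial sums $S_N=\sum_{|\alpha|\le N}c_\alpha z^\alpha$ converge in $\mathcal{A}^2_\varphi(\Omega)$ to some $g$. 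Since point evaluation is continuous on $\mathcal{A}^2_\varphi$ (cited from \cite{DG2}), $S_N(z)\to g(z)$ pointwise on $\Omega$, and by the Taylor expansion $S_N(z)\to f(z)$ on a neighborhood of the origin, forcing $f=g$ by the identity principle. This establishes completeness, from which \eqref{kernelform} follows at once.
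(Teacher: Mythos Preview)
Your proposal is correct and follows essentially the same approach as the paper: establish that the normalized monomials $\{I(\alpha)^{-1/2}z^\alpha\}$ form a complete orthonormal system via the same polar-coordinate orthogonality computation, then read off the kernel. The only minor divergences are in the completeness step---the paper uses Fatou's lemma to show $\|F-S_N\|_\varphi\to 0$ directly, whereas you argue via Cauchy sequences, continuity of point evaluation, and the identity principle---and in the final derivation, where the paper packages everything as an isometry $T:\mathcal{A}^2_\varphi\to l^2_I$ and computes $C_\alpha(K_w)$ from the reproducing property rather than invoking the orthonormal-basis formula for $K$ directly.
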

	
	\subsection{Applications of Theorem \ref{theorem1}
 -- Computation of some weighted Bergman kernels}
 
    Computation of the  Bergman  kernel function by explicit formulas is an important research direction in several complex variables.
	 In this section, as applications of Theorem \ref{theorem1}, we give some examples to calculate weighted Bergman kernels.

	The first example is to compute the weighted Bergman kernel for $\mathbb{C}^n$. 
	
\begin{thm}\label{example 1}
For real number $\mu_1, \mu_2 > 0$, define weighted Bergman space $\mathcal{A}_{\mu_1, \mu_2}^2(\mathbb{C}^n)$ by
\begin{equation*}
	\mathcal{A}_{\mu_1, \mu_2}^2(\mathbb{C}^n)=\left\lbrace F\in \mathcal{O}(\mathbb{C}^n):	\|F\|_{\mathcal{A}_{\mu_1, \mu_2}^2(\mathbb{C}^n)}=\left(\int_{\mathbb{C}^n}|F(z)|^2e^{-\mu_1\|z\|^{\mu_2}} dV(z)\right)^{\frac1{2}}<\infty\right\rbrace .\label{anorm-2}
\end{equation*}	Then the reproducing kernel for Hilbert space $\mathcal{A}_{\mu_1, \mu_2}^2(\mathbb{C}^n)$ is
\begin{equation}\label{EX1}
	K_{\mu_1, \mu_2}(z,w)=
	\sum_{k=0}^{\infty}
	\frac{{\mu_1}^{\frac{2k+2n}{\mu_2}}\mu_2\Gamma(k+n)}{2\pi^nk!\Gamma(\frac{2k+2n}{\mu_2})}\langle z,w \rangle^{k}.
\end{equation}

\end{thm}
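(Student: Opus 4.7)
The plan is to apply Theorem~\ref{theorem1} directly, since $\mathbb{C}^n$ is a connected Reinhardt domain containing the origin and the weight $\varphi(z) = e^{-\mu_1\|z\|^{\mu_2}} = e^{-\mu_1 (|z_1|^2+\cdots+|z_n|^2)^{\mu_2/2}}$ is positive, continuous, and a radial function of $|z_1|,\ldots,|z_n|$. So the only real work is to evaluate
\begin{equation*}
I(\alpha) = (2\pi)^n \int_{[0,\infty)^n} r^{2\alpha+\mathbbm{1}_n}\, e^{-\mu_1(r_1^2+\cdots+r_n^2)^{\mu_2/2}}\, dr
\end{equation*}
in closed form and then resum the series $\sum_\alpha I^{-1}(\alpha)\, z^\alpha \overline{w}^\alpha$ using the multinomial theorem.

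First I would substitute $s_j = r_j^2$ to absorb the factor $r^{\mathbbm{1}_n}$ and convert the integral to
\begin{equation*}
I(\alpha) = \pi^n \int_{[0,\infty)^n} s_1^{\alpha_1}\cdots s_n^{\alpha_n}\, e^{-\mu_1(s_1+\cdots+s_n)^{\mu_2/2}}\, ds_1\cdots ds_n.
\end{equation*}
The integrand is a function of $s_1+\cdots+s_n$ times a monomial, so I would apply the Liouville--Dirichlet formula
\begin{equation*}
\int_{[0,\infty)^n} s_1^{\alpha_1}\cdots s_n^{\alpha_n}\, f(s_1+\cdots+s_n)\, ds = \frac{\prod_{j=1}^n \Gamma(\alpha_j+1)}{\Gamma(|\alpha|+n)}\int_0^\infty t^{|\alpha|+n-1} f(t)\, dt
\end{equation*}
to collapse the $n$-fold integral to a one-dimensional one. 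Then the change of variables $u = \mu_1 t^{\mu_2/2}$ turns $\int_0^\infty t^{|\alpha|+n-1} e^{-\mu_1 t^{\mu_2/2}}\, dt$ into a Gamma function, yielding
\begin{equation*}
I(\alpha) = \frac{2\pi^n}{\mu_2}\cdot \frac{\alpha!}{\Gamma(|\alpha|+n)} \cdot \mu_1^{-2(|\alpha|+n)/\mu_2}\, \Gamma\!\left(\tfrac{2(|\alpha|+n)}{\mu_2}\right).
\end{equation*}

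Finally, I would group the series in \eqref{kernelform} by $k = |\alpha|$ and use the multinomial identity $\sum_{|\alpha|=k} z^\alpha \overline{w}^\alpha / \alpha! = \langle z,w\rangle^k / k!$ to turn $\sum_{|\alpha|=k} \alpha!^{-1}\, z^\alpha\overline{w}^\alpha$ into $\langle z,w\rangle^k/k!$. Since all other factors in $I^{-1}(\alpha)$ depend on $\alpha$ only through $k=|\alpha|$, this instantly produces \eqref{EX1}.

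The only genuinely delicate step is the Liouville--Dirichlet reduction from $n$ dimensions to one; everything else is bookkeeping of exponents. A secondary concern is convergence of the series defining $K$, but this is automatic from Theorem~\ref{theorem1} provided the Hilbert space framework applies, which reduces to checking that each monomial $z^\alpha$ lies in $\mathcal{A}^2_{\varphi}(\mathbb{C}^n)$, i.e.\ $I(\alpha) < \infty$ for all $\alpha$. The explicit evaluation above makes this transparent since $\mu_1, \mu_2 > 0$.
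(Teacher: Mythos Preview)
Your proposal is correct and follows essentially the same route as the paper: apply Theorem~\ref{theorem1}, compute $I(\alpha)$ explicitly, and resum over $|\alpha|=k$ via the multinomial identity (Lemma~\ref{lem3}). The only cosmetic difference is that the paper evaluates $I(\alpha)$ by passing to polar coordinates and using the sphere integral of Lemma~\ref{lem2}, whereas you first substitute $s_j=r_j^{2}$ and then invoke the Liouville--Dirichlet formula; both reductions give the identical value of $I(\alpha)$.
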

	

\begin{rem}\label{remark 1}
When $\mu_2=2$, according to equation (\ref{EX1}), the weighted Bergman kernel of Hilbert space $\mathcal{A}_{\mu_1, 2}^2(\mathbb{C}^n)$ is
\begin{align*}
	K_{\mu_1, 2}(z,w)
	&=\sum_{k=0}^{\infty}
	\frac{{\mu_1}^{k+n}\Gamma(k+n)}{\pi^nk!\Gamma(k+n)}\langle z,w \rangle^{k}\\
	&=\left(\frac{\mu_1}{\pi}\right)^n\sum_{k=0}^{\infty}\frac{(\mu_1 \langle z,w \rangle)^k}{k!}=\left(\frac{\mu_1}{\pi}\right)^ne^{\mu_1 \langle z,w \rangle}.		
\end{align*}
\end{rem}

For given positive real number $\mu_1, \mu_2 > 0$,
the generalized Fock-Bargmann-Hartogs domain $D_{n,m}$ is a Hartogs domain defined by
\begin{align}\label{Dnm}
	D_{n,m}:= \{(z, w)\in \mathbb{C}^n \times \mathbb{C}^m : \|w\|^2 <{e}^{-\mu_1\|z\|^{\mu_2}}
\}.
\end{align}
The domain, generalizing the definition in \cite{YA}, is an unbounded, inhomogeneous strongly pseudoconvex domain in $\mathbb{C}^n \times \mathbb{C}^m$ with a smooth real-analytic boundary. We compute the
weighted Bergman kernel of $D_{n,m}$ with respect to the weight $\varphi^{\eta}$, where $\varphi(z,w) :={e}^{-\mu_1\|z\|^{\mu_2}}-\|w\|^2 $ and
$\eta>-1$.	

\begin{thm}\label{example 2}
Suppose that $D_{n,m}$  is defined by (\ref{Dnm}) and $\eta>-1$. Define weighted Bergman space $\mathcal{A}^2(D_{n,m},\varphi^{\eta})$  with  $\varphi(z,w) ={e}^{-\mu_1\|z\|^{\mu_2}}-\|w\|^2 $ by
\begin{equation*}
	\mathcal{A}^2(D_{n,m},\varphi^{\eta})=\left\lbrace F\in \mathcal{O}(D_{n,m}): 	\|F\|_{\mathcal{A}^2(D_{n,m},\varphi^{\eta})}=\left(\int_{D_{n,m}}|F|^2\varphi^{\eta} dV\right)^{\frac1{2}}<\infty\right\rbrace.
\end{equation*}
Then the reproducing kernel for Hilbert space $\mathcal{A}^2(D_{n,m},\varphi^{\eta})$ is
\begin{equation}\label{EX2}
	\sum_{k_1=0}^{\infty}\sum_{k_2=0}^{\infty}
	\frac{\mu_2\Gamma(k_2+m+\eta+1)\Gamma(k_1+n)[\mu_1(k_2+m+\eta)]^{\frac{2k_1+2n}{\mu_2}}}{2\pi^{n+m}k_1!k_2!\Gamma(\eta+1)\Gamma(\frac{2k_1+2n}{\mu_2})}\langle z,s \rangle^{k_1} \langle w,t \rangle^{k_2}.
\end{equation}

\end{thm}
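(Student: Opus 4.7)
The plan is to invoke the main representation theorem (Theorem \ref{theorem1}) and then reduce the computation of $I(\alpha,\beta)$ to the one performed for Theorem \ref{example 1}. First I observe that $D_{n,m}$ is a connected Reinhardt domain (the defining inequality involves only the moduli $|z_j|$ and $|w_k|$) and contains the origin because $0<e^{0}=1$. Moreover, the weight $\varphi^{\eta}(z,w)=\bigl(e^{-\mu_{1}\|z\|^{\mu_{2}}}-\|w\|^{2}\bigr)^{\eta}$ is positive, continuous, and radial in each component. Hence Theorem \ref{theorem1} applies and gives
\begin{equation*}
K\bigl((z,w),(s,t)\bigr)=\sum_{\alpha\in\mathbb{N}^{n}}\sum_{\beta\in\mathbb{N}^{m}} I^{-1}(\alpha,\beta)\,z^{\alpha}w^{\beta}\overline{s}^{\alpha}\overline{t}^{\beta},
\end{equation*}
where $I(\alpha,\beta)=(2\pi)^{n+m}\int_{\widetilde{D}_{n,m}} r^{2\alpha+\mathbbm{1}_{n}}\rho^{2\beta+\mathbbm{1}_{m}}\bigl(e^{-\mu_{1}\|r\|^{\mu_{2}}}-\|\rho\|^{2}\bigr)^{\eta}\,dr\,d\rho$.

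Next I carry out the inner $\rho$-integral for fixed $r$. Setting $R^{2}:=e^{-\mu_{1}\|r\|^{\mu_{2}}}$, the $\rho$-region is the positive part of the ball $\|\rho\|^{2}<R^{2}$. With the substitution $\rho_{j}=R\sqrt{u_{j}}$ (so $d\rho_{j}=\tfrac{R}{2\sqrt{u_{j}}}\,du_{j}$), the integral collapses to a Dirichlet integral over the standard simplex,
\begin{equation*}
\int_{\|\rho\|^{2}<R^{2},\ \rho_{k}\ge 0}\rho^{2\beta+\mathbbm{1}_{m}}(R^{2}-\|\rho\|^{2})^{\eta}\,d\rho=\frac{R^{\,2|\beta|+2m+2\eta}}{2^{m}}\cdot\frac{\beta!\,\Gamma(\eta+1)}{\Gamma(|\beta|+m+\eta+1)},
\end{equation*}
valid for $\eta>-1$. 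Substituting back $R^{2|\beta|+2m+2\eta}=e^{-\mu_{1}(|\beta|+m+\eta)\|r\|^{\mu_{2}}}$, the remaining $r$-integral is precisely of the form treated in Theorem \ref{example 1}, with $\mu_{1}$ replaced by the effective constant $\lambda:=\mu_{1}(|\beta|+m+\eta)$.

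I then evaluate $\int_{\mathbb{R}_{+}^{n}} r^{2\alpha+\mathbbm{1}_{n}}e^{-\lambda\|r\|^{\mu_{2}}}\,dr$ by the same two-step substitution: $u_{j}=r_{j}^{2}$ turns the integral into $2^{-n}\int_{\mathbb{R}_{+}^{n}}u^{\alpha}e^{-\lambda T^{\mu_{2}/2}}du$ with $T=\sum u_{j}$, and then the polar-type change $u_{j}=Tv_{j}$ (with $v$ on the unit simplex) separates it into a classical Dirichlet integral $\alpha!/\Gamma(|\alpha|+n)$ times a one-dimensional gamma integral $\tfrac{2}{\mu_{2}}\lambda^{-(2|\alpha|+2n)/\mu_{2}}\Gamma\!\bigl(\tfrac{2|\alpha|+2n}{\mu_{2}}\bigr)$. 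Assembling every factor yields a closed form for $I(\alpha,\beta)$ in which $1/I(\alpha,\beta)$ depends on $\alpha$ only through $k_{1}:=|\alpha|$ and on $\beta$ only through $k_{2}:=|\beta|$.

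Finally, I reorganise the double series in $(\alpha,\beta)$ by levels $k_{1},k_{2}$ and apply the multinomial identity $\sum_{|\alpha|=k_{1}}\tfrac{k_{1}!}{\alpha!}z^{\alpha}\overline{s}^{\alpha}=\langle z,s\rangle^{k_{1}}$ (and similarly in $w,t$). This collapses the sum to the form \eqref{EX2}. The main obstacle is purely bookkeeping: tracking the combined $2^{\bullet}$, $\pi^{\bullet}$, and gamma factors arising from the two successive substitutions, and verifying that the Fubini argument is justified (which follows from $\eta+1>0$ together with the Gaussian-type decay in $r$ that makes every integrand absolutely integrable). No further analytic difficulty arises.
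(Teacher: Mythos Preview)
Your proposal is correct and follows essentially the same strategy as the paper: apply Theorem~\ref{theorem1}, compute $I(\alpha,\beta)$ by integrating first over the $\rho$-variables (reducing to a Beta/Dirichlet integral) and then over the $r$-variables (recognising the integral from Theorem~\ref{example 1} with effective parameter $\mu_1(|\beta|+m+\eta)$), and finally collapse via the multinomial identity. The only cosmetic difference is that the paper evaluates both integrals via spherical polar coordinates together with Lemma~\ref{lem2}, whereas you use the Cartesian-to-simplex substitutions $\rho_j=R\sqrt{u_j}$ and $u_j=r_j^2$ leading to Dirichlet integrals; these are equivalent standard routes to the same gamma-function expressions.
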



\begin{rem}\label{remark 2}
When $\mu_2=0$, $D_{n,m}$ becomes
$$ \{(z, w)\in \mathbb{C}^n \times \mathbb{C}^m : \|w\|^2 <{e}^{-\mu_1\|z\|^{2}}
\},$$
which is mentioned in the introduction.  Using 
(\ref{EX2}), we obtain its kernel function (please refer to \cite{YA}):
$$
\sum_{k_1=0}^{\infty}\sum_{k_2=0}^{\infty}
\frac{\Gamma(k_2+m+\eta+1)\Gamma(k_1+n)[\mu_1(k_2+m+\eta)]^{k_1+n}}{\pi^{n+m}k_1!k_2!\Gamma(\eta+1)\Gamma(k_1+n)}\langle z,s \rangle^{k_1} \langle w,t \rangle^{k_2}.
$$
\end{rem}

\begin{thm}\label{example 3}

 Let
$$
V_{\eta}=\left\{\left(z, z', w\right) \in \mathbb{C}^{n} \times \mathbb{C}^{m} \times \mathbb{C} : \sum_{j=1}^{n} e^{\eta_{j}|w|^{2}}\left|z_{j}\right|^{2}+\|z'\|^{2}<1\right\},
$$
$\varphi(z, z', w) :=1-\sum_{j=1}^{n} e^{\eta_{j}|w|^{2}}\left|z_{j}\right|^{2}-\|z'\|^{2} $,  $\eta_j>0~(j=1,2,\cdots,n)$, $a>-1$, and denote $\mathcal{A}^2(V_{\eta},\varphi^{a})$ the space of analytic function $F(z, z', w)$ in $V_{\eta}$ such that
\begin{equation*}
	\|F\|_{\mathcal{A}^2(V_{\eta},\varphi^{a})}=\left(\int_{V_{\eta}}|F|^2\varphi^{a} dV\right)^{\frac1{2}}<\infty.
\end{equation*}
 Then the reproducing kernel $K_{\mathcal{A}^2(V_{\eta},\varphi^{a})}$ for Hilbert space $\mathcal{A}^2(V_{\eta},\varphi^{a})$ is
	\begin{equation*}
\frac{e^{|\eta| w\bar{t}}}{\pi^{n+m+1}\Gamma(a+1)}\left(\frac{\Gamma(n+m+a+2) \sum_{j=1}^{n} \eta_{j} e^{\eta_{j} w \bar{t}} z_{j} \bar{s}_{j}}{\phi^{n+m+a+2}}+\frac{|\eta|\Gamma(n+m+a+1)}{\phi^{n+m+a+1}}\right),
	\end{equation*}
where $\phi\left(z, z', w ; s, s', t\right)=1-\sum_{j=1}^{n} e^{\eta_{j} w \bar{t}} z_{j} \bar{s}_{j}-\langle z', s' \rangle$.
\end{thm}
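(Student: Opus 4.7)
Since $V_{\eta}$ is a connected Reinhardt domain containing the origin and the weight $\varphi^{a}$ is clearly radial in each of the variables $z_{j}$, $z'_{k}$ and $w$, Theorem \ref{theorem1} applies directly. Writing a multi-index as $(\alpha,\beta,\gamma)\in\mathbb{N}^{n}\times\mathbb{N}^{m}\times\mathbb{N}$ and passing to polar coordinates in each complex variable, the whole task reduces to evaluating
$$
I(\alpha,\beta,\gamma)=(2\pi)^{n+m+1}\int_{\tilde V_{\eta}}\prod_{j}r_{j}^{2\alpha_{j}+1}\prod_{k}(r'_{k})^{2\beta_{k}+1}\rho^{2\gamma+1}\Bigl(1-\sum_{j}e^{\eta_{j}\rho^{2}}r_{j}^{2}-\|r'\|^{2}\Bigr)^{a}\,dr\,dr'\,d\rho
$$
and then summing the resulting series $\sum I(\alpha,\beta,\gamma)^{-1}z^{\alpha}\bar s^{\alpha}(z')^{\beta}(\overline{s'})^{\beta}w^{\gamma}\bar t^{\gamma}$.

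The first step is the change of variables $u_{j}=e^{\eta_{j}\rho^{2}/2}r_{j}$, which pulls out a factor $\exp\!\bigl(-\rho^{2}\sum_{j}\eta_{j}(\alpha_{j}+1)\bigr)$ and decouples the $(u,r')$-integral, which becomes a standard Dirichlet integral on the positive octant of the unit ball; after the substitution $v_{j}=u_{j}^{2}$, $v'_{k}=(r'_{k})^{2}$ it evaluates to $\prod\alpha_{j}!\prod\beta_{k}!\,\Gamma(a+1)/\Gamma(|\alpha|+|\beta|+n+m+a+1)$ up to a factor $2^{-(n+m)}$. The remaining $\rho$-integral $\int_{0}^{\infty}\rho^{2\gamma+1}e^{-c(\alpha)\rho^{2}}d\rho$, where $c(\alpha):=|\eta|+\sum_{j}\eta_{j}\alpha_{j}$, is a plain Gaussian, giving $\gamma!/(2c(\alpha)^{\gamma+1})$. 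Combining everything leaves
$$
I(\alpha,\beta,\gamma)=\frac{\pi^{n+m+1}\prod\alpha_{j}!\prod\beta_{k}!\,\Gamma(a+1)\,\gamma!}{\Gamma(|\alpha|+|\beta|+n+m+a+1)\,c(\alpha)^{\gamma+1}}.
$$

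Next I would carry out the summation in stages. Summing in $\gamma$ first produces $c(\alpha)\exp(c(\alpha)w\bar t)$; writing $c(\alpha)\exp(c(\alpha)w\bar t)=e^{|\eta|w\bar t}\bigl(|\eta|+\sum_{j}\eta_{j}\alpha_{j}\bigr)\prod_{j}e^{\eta_{j}\alpha_{j}w\bar t}$ and setting $u_{j}:=e^{\eta_{j}w\bar t}z_{j}\bar s_{j}$, $v_{k}:=z'_{k}\overline{s'_{k}}$, the bilinear kernel becomes
$$
K=\frac{e^{|\eta|w\bar t}}{\pi^{n+m+1}\Gamma(a+1)}\Bigl(|\eta|\,S+\sum_{j}\eta_{j}T_{j}\Bigr),
$$
with $S=\sum\frac{\Gamma(|\alpha|+|\beta|+n+m+a+1)}{\prod\alpha_{j}!\prod\beta_{k}!}\prod u^{\alpha}\prod v^{\beta}$ and $T_{j}$ the same sum weighted by $\alpha_{j}$. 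Collecting by total degree $N=|\alpha|+|\beta|$ via the multinomial formula yields $S=\sum_{N}\frac{\Gamma(N+n+m+a+1)}{N!}(1-\phi)^{N}=\Gamma(n+m+a+1)\phi^{-(n+m+a+1)}$, using $\phi=1-\sum u_{j}-\sum v_{k}$ and the standard series $\sum_{N}\binom{N+s-1}{N}x^{N}=(1-x)^{-s}$. The factor $\alpha_{j}$ in $T_{j}$ is produced by $u_{j}\partial_{u_{j}}$, so $T_{j}=u_{j}\,\partial_{u_{j}}S=u_{j}\Gamma(n+m+a+2)\phi^{-(n+m+a+2)}$. Plugging these two identities back gives exactly the stated formula.

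The main obstacle is the last paragraph: the factor $c(\alpha)=|\eta|+\sum_{j}\eta_{j}\alpha_{j}$ arising from the $\rho$-integration prevents the multiple sum from reducing to a single generating-function evaluation, and one must split it into the plain Dirichlet-type sum $S$ plus the weighted sums $T_{j}$, the latter being handled by recognising them as logarithmic derivatives of $S$. Aside from that, everything is bookkeeping, though some care is needed to verify the convergence of the integrals and the absolute convergence of the doubly infinite series on compact subsets of $V_{\eta}\times V_{\eta}$, so that the termwise manipulations and the application of Theorem \ref{theorem1} are legitimate.
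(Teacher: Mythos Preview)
Your proposal is correct and follows essentially the same route as the paper: the same change of variables $u_{j}=e^{\eta_{j}\rho^{2}/2}r_{j}$, the same Dirichlet and Gaussian integral evaluations leading to the identical formula for $I(\alpha,\beta,\gamma)$, and the same summation strategy (sum over $\gamma$ first, split the factor $c(\alpha)=|\eta|+\langle\alpha,\eta\rangle$ into two pieces, and treat the $\alpha_{j}$-weighted piece via a derivative $u_{j}\partial_{u_{j}}$). The only cosmetic difference is that you collapse the $(\alpha,\beta)$-sum into a single multinomial over all $n+m$ variables at once, whereas the paper first sums over $\beta$ and then over $\alpha$ in two nested binomial series---a minor reorganization of the same computation.
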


\begin{rem}\label{remark 3}
When $a=0$, Theorem \ref{example 3} shows  the reproducing kernel of unweighted Bergman space $\mathcal{A}^2(V_{\eta})$, which is just Huo \cite[Example 4.3]{HZH}.  
\end{rem}

	\section{Preliminary lemmas}\label{lemmas}
In order to prove the main results above,
		we need the following lemmas.

		\begin{lem}\cite{HO}\label{PowerSeries}
 Let $\Omega\subset \mathbb{C}^n$ be a connected Reinhardt domain containing the origin, $D=\{z=(z_1,\dots,z_n)\in \Omega :\left|z_{j}\right|<r_{j}, j=1,\dots,n\}\subset \Omega$, and $F\in \mathcal{O}(\Omega)$. Then there exists one (and only one) power series such that
$$	
F(z)=\sum_{\alpha \in \mathbb N^n}C_\alpha(F)z^{\alpha},	
$$	
with normal convergence in $\Omega$, where
$$
C_\alpha(F)=\frac{D^{\alpha} F(0) }{  \alpha!} =\frac{1}{(2 \pi i)^{n} } \int_{|\zeta_{1}|=\rho_1}\cdots\int_{|\zeta_{n}|=\rho_n} \frac{F\left(\zeta_{1}, \cdots, \zeta_{n}\right)}{\prod_{j=1}^{n} \zeta_{j}^{\alpha_{j}+1}}  d \zeta_{1} \cdots d \zeta_{n},
$$
$0<\rho_1<r_1, \cdots, 0<\rho_n<r_n$.
		\end{lem}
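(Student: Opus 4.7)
Proof plan for Lemma \ref{PowerSeries}:

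The argument splits naturally into two parts: a local Taylor expansion with the integral formula on the polydisc $D$, and an extension of normal convergence of the resulting series to all of $\Omega$ via the Reinhardt symmetry.

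For the local expansion, fix radii $0<\rho_j<r_j$. The iterated one-variable Cauchy integral formula on the distinguished torus $T_\rho=\{|\zeta_j|=\rho_j\}$ yields, for every $z$ in the polydisc $\{|z_j|<\rho_j\}$,
$$F(z)=\frac{1}{(2\pi i)^n}\int_{T_\rho}\frac{F(\zeta)}{\prod_j(\zeta_j-z_j)}\,d\zeta_1\cdots d\zeta_n.$$
Expanding each Cauchy kernel as the geometric series $(\zeta_j-z_j)^{-1}=\sum_{k\geq 0}z_j^k/\zeta_j^{k+1}$, which converges uniformly on $T_\rho$ for $z$ in any compact subset of the polydisc, multiplying out, and exchanging summation with the compact integration, one recovers $F(z)=\sum_{\alpha\in\mathbb N^n}C_\alpha(F)z^\alpha$ together with the stated torus integral formula for $C_\alpha(F)$. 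Differentiating the series term by term at the origin (legitimate by the uniform convergence) shows $C_\alpha(F)=D^\alpha F(0)/\alpha!$, giving both the uniqueness claim and independence of the coefficients from the choice of admissible radii.

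To extend normal convergence to all of $\Omega$, fix an arbitrary $z^\star\in\Omega$ and use openness of $\Omega$ to produce a point $w\in\Omega$ whose moduli satisfy $|w_j|>|z^\star_j|$ and $|w_j|>0$ for every $j$ (handle vanishing coordinates of $z^\star$ by slight prior perturbation). The Reinhardt hypothesis forces the whole torus $T_w=\{|\zeta_j|=|w_j|\}$ into $\Omega$. Parametrising $\zeta_j=|w_j|e^{i\theta_j}$, the Fourier expansion of $F$ on $T_w$ reads
$$F(\zeta)=\sum_{\beta\in\mathbb Z^n}b_\beta\,e^{i\beta\cdot\theta},\qquad \sum_{\beta\in\mathbb Z^n}|b_\beta|^2\leq M_w^2,$$
with $M_w=\sup_{T_w}|F|$ and Parseval for the bound. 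I would then identify $b_\beta=|w|^\beta C_\beta(F)$ for $\beta\in\mathbb N^n$ and $b_\beta=0$ for $\beta\not\in\mathbb N^n$ by sliding the integral defining $b_\beta$ from $T_w$ down to a small torus $T_\rho\subset D$ through a continuous family of tori in $\Omega$ that stays within the Reinhardt shadow $\tilde\Omega\cap\mathbb R^n_{>0}$, and then matching against the local expansion already established on $T_\rho$. Parseval combined with this identification yields $\sum_\alpha|C_\alpha(F)|^2|w|^{2\alpha}\leq M_w^2$, and Cauchy--Schwarz gives
$$\sum_{\alpha\in\mathbb N^n}|C_\alpha(F)|\,|z|^\alpha\leq M_w\prod_{j=1}^n\left(1-\frac{|z_j|^2}{|w_j|^2}\right)^{-1/2}$$
for every $z$ with $|z_j|<|w_j|$, so the series converges normally on a neighbourhood of $z^\star$ inside $\Omega$. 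Covering any compact $K\subset\Omega$ by finitely many such polydiscs delivers normal convergence throughout $\Omega$.

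The main obstacle is the identification of the Fourier coefficients $b_\beta$ on the distant torus $T_w$ with the Taylor coefficients $C_\alpha(F)$ at the origin, together with the vanishing for $\beta\not\in\mathbb N^n$. Both require deforming the integration torus through $\Omega$ while avoiding the coordinate hyperplanes where $\zeta^{\alpha+\mathbbm{1}_n}$ vanishes, and it is precisely the connectedness hypothesis together with the Reinhardt rotational symmetry that supplies the necessary one-parameter family of intermediate tori inside $\tilde\Omega\cap\mathbb R^n_{>0}$. Without this symmetry one would only recover a Fourier expansion local to each torus and could not assemble it into a single power series in $z^\alpha$ converging on all of $\Omega$.
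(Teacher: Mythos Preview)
The paper does not prove this lemma at all: it is quoted from H\"ormander \cite{HO} and stated without proof, so there is no ``paper's own proof'' to compare against. Your outline is a correct and essentially standard route to this classical fact. The only points I would tighten are: (i) producing $w$ with $|w_j|>|z^\star_j|$ and $|w_j|>0$ uses the Reinhardt symmetry (pass to $(|z^\star_1|,\dots,|z^\star_n|)\in\Omega$ and then bump each coordinate), not merely openness; and (ii) the deformation of tori relies on the connectedness of $\Omega\cap(\mathbb C^\ast)^n$, which you should justify by noting that the coordinate hyperplanes have real codimension~$2$, so removing them from the connected open set $\Omega$ leaves it connected. With those two remarks your argument goes through.
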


Before giving the next lemma, let's introduce two Hilbert spaces.
We suppose that $\varphi(z)$ is a positive continuous function on a connected Reinhardt domain $\Omega$ containing the origin. In addition, we assume that $\varphi(z)=\varphi(|z_1|,|z_2|,\cdots,|z_n|)$ is a radial function with respect to each component $z_j \,(j=1,\cdots, n).$ Let
				\begin{equation}\label{I_alpha}
					I(\alpha)=(2\pi)^n\int_{\tilde{\Omega}}r^{2\alpha+\mathbbm{1}_n}\varphi(r)dr,
							\end{equation}
			where $\mathbbm{1}_n=(1,1,\cdots,1)\in\mathbb R^n$, $\tilde{\Omega}$ is the Reinhardt shadow of $\Omega$.
			
The weighted $l^p_I $ space is the set of sequences $C:=\{C_\alpha\}_{\alpha\in \mathbb N^n}$ such that
			$$
			\|C\|_{l^p_I}=\sum_{\alpha\in \mathbb N^n}|C_\alpha|^pI(\alpha)<\infty.
			$$
		We also denote
		$\mathcal{A}^2_\varphi$ the
		subspace of 
		$L^2_{\varphi}(\Omega)$,
		which consists of all holomorphic functions $f$ in the domain $\Omega$ and $f\in L^2_{\varphi}(\Omega)$. We see that $\mathcal{A}^2_\varphi$ and $l^2_I$ are Hilbert spaces with the inner product
			$$
			\langle F,G\rangle_{\varphi}=\int_{\Omega}F(z)\overline{G(z)}\varphi(|z_1|,|z_2|,\cdots,|z_n|)dV(z),
			$$
			for $F,G\in \mathcal{A}^2_\varphi(\Omega)$  and
			$$
			\langle C,\tilde{C}\rangle_I=\sum_{\alpha\in \mathbb N^n}C_\alpha \overline{\tilde{C}_\alpha} I(\alpha),
			$$
			for $C:=\{C_\alpha\}_{\alpha\in \mathbb N^n} ,\tilde{C}:=\{\tilde{C}_\alpha\}_{\alpha\in \mathbb N^n}\in l^2_I$, respectively.
		\begin{lem}\label{lem1}
			The transform $T:F \longmapsto \{C_\alpha(F)\}_{\alpha\in \mathbb N^n}$ is an isometry from $\mathcal{A}^2_\varphi$ to $l^2_I$ preserving the Hilbert space norms, i.e.,
	$$
	\|F\|_{\mathcal{A}_{\varphi}^2}=\|T(F)\|_{l_I^2},
	$$
			where
			$
	        C_\alpha(F)=\frac{D^{\alpha} F(0) }{  \alpha!}.
		    $   
			
		\end{lem}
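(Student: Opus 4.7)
The plan is to reduce the norm computation to a Parseval-type identity on the real $n$-torus, exploiting the Reinhardt symmetry of $\Omega$ and the fact that $\varphi$ depends only on $(|z_1|,\dots,|z_n|)$. The first step is to apply Lemma \ref{PowerSeries} to expand any $F \in \mathcal{A}^2_\varphi(\Omega)$ as a power series $F(z) = \sum_{\alpha \in \mathbb{N}^n} C_\alpha(F) z^\alpha$ converging normally on $\Omega$, with $C_\alpha(F) = D^\alpha F(0)/\alpha!$. I would then switch to polar coordinates $z_j = r_j e^{i\theta_j}$, under which $dV(z) = r_1 \cdots r_n \, dr_1 \cdots dr_n \, d\theta_1 \cdots d\theta_n$, and observe that neither $\varphi$ nor $|z^\alpha|^2 = r^{2\alpha}$ depends on the angular variables.

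The second step is the standard orthogonality relation on $[0,2\pi]^n$, namely
\begin{equation*}
\int_{[0,2\pi]^n} z^\alpha \overline{z}^\beta \, d\theta_1 \cdots d\theta_n = (2\pi)^n \, r^{\alpha+\beta} \, \delta_{\alpha\beta}.
\end{equation*}
On any compact Reinhardt subdomain $K \subset \Omega$ the series for $F$ converges uniformly, so the double series for $|F|^2 = \sum_{\alpha,\beta} C_\alpha \overline{C_\beta} z^\alpha \overline{z}^\beta$ can be integrated termwise in the angular variables, and all off-diagonal contributions vanish. This yields
\begin{equation*}
\int_{[0,2\pi]^n} |F(r e^{i\theta})|^2 \, d\theta = (2\pi)^n \sum_{\alpha \in \mathbb{N}^n} |C_\alpha(F)|^2 \, r^{2\alpha}.
\end{equation*}

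The third step is to multiply by $\varphi(r) \, r_1 \cdots r_n$ and integrate over $\tilde\Omega$ to recover $\|F\|^2_{\mathcal{A}^2_\varphi}$. Since the remaining series has non-negative terms, Tonelli's theorem allows the interchange of sum and integral, giving
\begin{equation*}
\|F\|^2_{\mathcal{A}^2_\varphi} = \sum_{\alpha \in \mathbb{N}^n} |C_\alpha(F)|^2 \, (2\pi)^n \int_{\tilde\Omega} r^{2\alpha + \mathbbm{1}_n} \varphi(r) \, dr = \sum_{\alpha \in \mathbb{N}^n} |C_\alpha(F)|^2 \, I(\alpha) = \|T(F)\|^2_{l^2_I}.
\end{equation*}
The main technical obstacle is this interchange of sum and integral, since $\Omega$ may be unbounded (as in the applications to $\mathbb{C}^n$ and $D_{n,m}$) and the series for $F$ only converges normally on compact subsets. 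The cleanest way around this is to exhaust $\Omega$ by an increasing family of compact Reinhardt subdomains $K_j \nearrow \Omega$, perform the angular integration and orthogonal decomposition on each $K_j$ (where uniform convergence is available), and then pass to the limit via monotone convergence on the resulting non-negative radial series. Finiteness of $\|F\|_{\mathcal{A}^2_\varphi}$ forces $\{C_\alpha(F)\} \in l^2_I$, so $T$ is well defined and norm-preserving, which is exactly the claim.
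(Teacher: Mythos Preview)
Your argument is correct and takes a genuinely different, more direct route than the paper. The paper first proves that the monomials $\{z^\alpha/\sqrt{I(\alpha)}\}$ form an orthogonal system, then uses a Bessel-inequality decomposition
\[
\|F\|^2_{\mathcal{A}^2_\varphi} = \Bigl\|F - \sum_{|\alpha|\le N} C_\alpha(F)\varphi_\alpha\Bigr\|^2_{\mathcal{A}^2_\varphi} + \sum_{|\alpha|\le N}|C_\alpha(F)|^2 I(\alpha),
\]
and finally applies Fatou's lemma to the tail $\sum_{N<|\alpha|<M} C_\alpha(F)\varphi_\alpha$ to force the remainder to vanish as $N\to\infty$, thereby obtaining completeness of the system and the norm identity simultaneously. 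Your approach instead performs the angular integration first---which is legitimate pointwise in $r\in\tilde\Omega$ because the torus over any fixed $r$ is a compact subset of $\Omega$, so the exhaustion you mention is not even strictly needed at that stage---and then invokes Tonelli on the resulting non-negative radial series. Your route is shorter and avoids the Bessel/Fatou machinery; the paper's route has the side benefit of explicitly exhibiting $\{z^\alpha/\sqrt{I(\alpha)}\}$ as a complete orthonormal basis, a fact that is conceptually useful elsewhere in the paper when the kernel is written as $\sum_\alpha I(\alpha)^{-1} z^\alpha \bar w^\alpha$.
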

		\begin{proof}
			Firstly, it's easy to show that $T:\mathcal{A}^2_\varphi\rightarrow l^2_I$ is a linear transformation. 
			Further, let $\varphi_\alpha(z)=z^{\alpha},$ we prove that the form of a complete orthonormal basis on Reinhardt domain $\Omega$ is
			$\{\frac{\varphi_\alpha(z)}{\sqrt{I(\alpha)}}:\alpha\in \mathbb N^n\}$, where $I(\alpha)$ is defined by (\ref{I_alpha}). 
			As in the case of $\alpha\neq\beta$,
				\begin{eqnarray*}
				\int_\Omega z^{\alpha}	\overline{z}^{\beta}\varphi(|z_1|,|z_2|,\cdots,|z_n|)dV(z)				
				=\int_{0}^{2\pi}\cdots\int_{0}^{2\pi}e^{i\langle \alpha-\beta,\theta\rangle}d\theta\int_{\tilde{\Omega}}r^{\alpha+\beta+\mathbbm{1}_n}\varphi(r)dr=0,
			\end{eqnarray*}
	where $
	\tilde{\Omega}$ is the Reinhardt shadow of $\Omega$.
		If $\alpha=\beta$, we have
		\begin{eqnarray*}
		\|\varphi_\alpha\|^2_{\mathcal{A}^2_\varphi}=\int_\Omega |z^{\alpha}|^2	\varphi(|z_1|,|z_2|,\cdots,|z_n|)dV(z)	=(2\pi)^n\int_{\tilde{\Omega}}r^{2\alpha+\mathbbm{1}_n}\varphi(r)dr=I(\alpha),
	\end{eqnarray*}
	where $\mathbbm{1}_n=(1,1,\cdots,1)\in\mathbb R^n$.	From Lemma \ref{PowerSeries}, for any $F\in \mathcal{A}^2_\varphi(\Omega)$,  there exists one (and only one) power series such that
	$$	
	F(z)=\sum_{\alpha \in \mathbb N^n}C_\alpha(F)z^{\alpha},	
	$$
		where
	$
	C_\alpha(F)=\frac{D^{\alpha} F(0) }{  \alpha!}.
	$  
	Orthogonality of $\{z^{\alpha}:\alpha\in \mathbb N^n\}$ of $\mathcal{A}^2_\varphi$ gives
		\begin{eqnarray*}
		\|F\|^2_{\mathcal{A}^2_\varphi}	&=&\|F-\sum_{|\alpha|  \leqslant N}C_\alpha(F)\varphi_\alpha\|^2_{\mathcal{A}^2_\varphi}+
		\|\sum_{|\alpha|  \leqslant N}C_\alpha(F)\varphi_\alpha\|^2_{\mathcal{A}^2_\varphi}\\
		&=&\|F-\sum_{|\alpha|  \leqslant N}C_\alpha(F)\varphi_\alpha\|^2_{\mathcal{A}^2_\varphi}+
		\sum_{|\alpha| \leqslant N}|C_\alpha(F)|^2I(\alpha).
	\end{eqnarray*}	
	Then
		$
		\sum_{|\alpha| \leqslant N}|C_\alpha(F)|^2I(\alpha)\leqslant\|F\|^2_{\mathcal{A}^2_\varphi}	,
	    $
	    which follows that	
		$$
	\|C_\alpha(F)\|_{l^2_I}=\sum_{\alpha\in \mathbb N^n}|C_\alpha(F)|^2I(\alpha)
	\leqslant\|F\|^2_{\mathcal{A}^2_\varphi}.
	$$
	On the other hand, according to the Fatou's lemma,
		\begin{eqnarray*}
		\|F-\sum_{|\alpha|  \leqslant N}C_\alpha(F)\varphi_\alpha\|^2_{\mathcal{A}^2_\varphi}
		&=&\|\lim_{M\rightarrow \infty}\sum_{N<|\alpha|<M  }C_\alpha(F)\varphi_\alpha\|^2_{\mathcal{A}^2_\varphi}\\
		&\leqslant&\varliminf_{M\rightarrow \infty}\|\sum_{N<|\alpha|<M  }C_\alpha(F)\varphi_\alpha\|^2_{\mathcal{A}^2_\varphi}\\
		&=&\varliminf_{M\rightarrow \infty}\sum_{N<|\alpha|<M}|C_\alpha(F)|^2I(\alpha).
	\end{eqnarray*}
	An immediately consequence is
		\begin{eqnarray*}
		\lim_{N\rightarrow \infty}\|F-\sum_{|\alpha|  \leqslant N}C_\alpha(F)\varphi_\alpha\|^2_{\mathcal{A}^2_\varphi}=0.
	\end{eqnarray*}	
	Therefore, $\{\frac{\varphi_\alpha(z)}{\sqrt{I(\alpha)}}:\alpha\in \mathbb N^n\}$ is complete and $ \|F\|_{\mathcal{A}_{\varphi}^2}=\|T(F)\|_{l_I^2}.$
		\end{proof}

		
	\begin{lem}\label{lem2}
	For $\alpha=(\alpha_1,\alpha_2,\cdots,\alpha_n), \alpha_j>-1~(j=1,\cdots,n)$, the following multiple integral exists:
	\begin{eqnarray}\label{equation lemma2 sphere integral}
	\int_{\mathbb S_n}|x_1|^{2\alpha_1+1}\cdots|x_n|^{2\alpha_n+1} dx=\frac{2\alpha!}{\Gamma(|\alpha|+n)},
	\end{eqnarray}
     where $\mathbb S_n$ is the unit sphere in $\mathbb R^n$, $|\alpha| = \alpha_1 +\cdots+ \alpha_n$ and $\alpha! =\Gamma(\alpha_1+1)\cdots \Gamma(\alpha_n+1)$.
    \begin{proof}
    We evaluate the integral
    $$
    I=\int_{\mathbb R^n}|y_1|^{2\alpha_1+1}\cdots|y_n|^{2\alpha_n+1} e^{-|y|^{2}} dy
    $$
    by two different methods. First,
    \begin{align*}
    	I &=\prod_{k=1}^{n} \int_{\mathbb{R}} |y_k|^{2\alpha_{k}+1} e^{-y_k^{2}} d y_k
    	=2\prod_{k=1}^{n} \int_{0}^\infty y_k^{2\alpha_{k}+1} e^{-y_k^{2}} d y_k \\
    	&=\prod_{k=1}^{n} \int_{0}^{\infty} y_k^{\alpha_k} e^{-y_k} d y_k =\alpha !.
    \end{align*}
    Then, integration in polar coordinates gives
     \begin{align*}
    	I
    	&=\int_{0}^{\infty}r^{2|\alpha|+2n-1}e^{-r^{2}}dr\int_{\mathbb S_n}|x_1|^{2\alpha_1+1}\cdots|x_n|^{2\alpha_n+1} dx\\
    	&=\frac12\int_{0}^{\infty}t^{|\alpha|+n-1}e^{-t}dt\int_{\mathbb S_n}|x_1|^{2\alpha_1+1}\cdots|x_n|^{2\alpha_n+1} dx \\
    	&=\frac{\Gamma(|\alpha|+n)}{2}\int_{\mathbb S_n}|x_1|^{2\alpha_1+1}\cdots|x_n|^{2\alpha_n+1} dx.
    \end{align*}
Comparing the two answers, we obtain
	\begin{eqnarray*}
	\int_{\mathbb S_n}|x_1|^{2\alpha_1+1}\cdots|x_n|^{2\alpha_n+1} dx=\frac{2\alpha!}{\Gamma(|\alpha|+n)}.
\end{eqnarray*}		
    \end{proof}
	\end{lem}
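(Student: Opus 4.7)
The plan is to evaluate a single Gaussian-weighted integral in two different ways, using the sphere integral in \eqref{equation lemma2 sphere integral} implicitly via polar coordinates, and then equate the two expressions to extract the desired identity. Concretely, I would introduce the auxiliary integral
\[
I=\int_{\mathbb R^n}|y_1|^{2\alpha_1+1}\cdots|y_n|^{2\alpha_n+1}\,e^{-\|y\|^{2}}\,dy,
\]
which converges whenever $2\alpha_j+1>-1$, i.e.\ $\alpha_j>-1$, matching the hypothesis.

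First I would compute $I$ by Fubini, reducing it to a product of one-dimensional integrals. Each factor $\int_{\mathbb R}|y_k|^{2\alpha_k+1}e^{-y_k^2}\,dy_k$ can be halved by symmetry and then evaluated via the substitution $t=y_k^{2}$, giving $\Gamma(\alpha_k+1)$. Taking the product yields $I=\alpha!$.

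Second I would switch to polar coordinates $y=r x$ with $r\ge 0$ and $x\in\mathbb S_n$. Because the integrand is homogeneous of degree $2|\alpha|+n$ in $y$ (counting the Jacobian $r^{n-1}$), the radial and angular parts separate:
\[
I=\Big(\int_0^{\infty} r^{2|\alpha|+2n-1}e^{-r^{2}}\,dr\Big)\Big(\int_{\mathbb S_n}|x_1|^{2\alpha_1+1}\cdots|x_n|^{2\alpha_n+1}\,dx\Big).
\]
The radial integral is $\tfrac12\Gamma(|\alpha|+n)$ by the substitution $t=r^{2}$.

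Equating the two expressions for $I$ and solving for the spherical integral immediately gives the claim. The only mildly subtle point is verifying convergence at the origin in the radial integral, which is guaranteed by the assumption $\alpha_j>-1$ (ensuring $2|\alpha|+2n-1>-1$); apart from this, the argument is purely a bookkeeping exercise with the Gamma integral, so I do not foresee a substantive obstacle.
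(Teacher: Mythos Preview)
Your proposal is correct and follows exactly the same approach as the paper: introduce the Gaussian-weighted integral $I$, compute it once as a product of one-dimensional Gamma integrals (yielding $\alpha!$) and once in polar coordinates (yielding $\tfrac12\Gamma(|\alpha|+n)$ times the sphere integral), then equate. Your remarks on convergence are a slight refinement, but the argument is otherwise identical.
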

	
	\begin{lem}\cite{ZKH}\label{lem3}
	For a multi-index $m = (m_1, \cdots, m_n)$ of nonnegative integers $m_i$, a positive integer $N$, we have the multi-nomial formula
	\begin{eqnarray}\label{equation lemma3 multiindex}	
		(z_1 + \cdots + z_n)^N =\sum_{|m|=N}\frac{N!}{m!}z^m.
	\end{eqnarray}
 where  $
 z^m=z_1^{m_1} \ldots z_n^{m_n},
 $ $|m| = m_1 +\cdots+ m_n$ and $m! =\Gamma(m_1+1)\cdots \Gamma(m_n+1)$.
   \end{lem}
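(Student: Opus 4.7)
The plan is to prove the multinomial identity by induction on the number $n$ of variables, bootstrapping off the classical binomial theorem. The base case $n=1$ is immediate: the only multi-index $m=(m_1)$ with $|m|=N$ is $m=(N)$, and the identity reduces to $z_1^N = \tfrac{N!}{N!}z_1^N$. The case $n=2$ is the binomial theorem itself, $(z_1+z_2)^N = \sum_{k=0}^{N}\binom{N}{k}z_1^{N-k}z_2^{k}$, which is exactly the stated formula once one writes $m=(N-k,k)$ and $\binom{N}{k}=\tfrac{N!}{(N-k)!\,k!}=\tfrac{N!}{m!}$.

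For the inductive step, assume the identity for $n-1$ variables and all nonnegative integers $N$. Group the first $n-1$ summands and apply the binomial theorem with respect to the split $(z_1+\cdots+z_{n-1})+z_n$:
\begin{equation*}
(z_1+\cdots+z_n)^N = \sum_{k=0}^{N}\binom{N}{k}(z_1+\cdots+z_{n-1})^{N-k}z_n^{k}.
\end{equation*}
By the inductive hypothesis, for each $k$,
\begin{equation*}
(z_1+\cdots+z_{n-1})^{N-k} = \sum_{\substack{m'=(m_1,\dots,m_{n-1})\\|m'|=N-k}}\frac{(N-k)!}{m'!}\,z_1^{m_1}\cdots z_{n-1}^{m_{n-1}}.
\end{equation*}
Substituting and using $\binom{N}{k}(N-k)! = \tfrac{N!}{k!}$ collapses the double sum into
\begin{equation*}
\sum_{k=0}^{N}\sum_{|m'|=N-k}\frac{N!}{k!\,m'!}\,z_1^{m_1}\cdots z_{n-1}^{m_{n-1}}z_n^{k}.
\end{equation*}

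The final step is the reindexing $m=(m_1,\dots,m_{n-1},k)\in\mathbb{N}^n$ with $|m|=N$ and $m! = m'!\,k!$, which identifies the above with $\sum_{|m|=N}\tfrac{N!}{m!}z^{m}$. I do not expect any real obstacle: the only point requiring care is the bookkeeping that turns the nested sum over $k$ and $m'\in\mathbb{N}^{n-1}$ into a single sum over $m\in\mathbb{N}^{n}$ with $|m|=N$, which amounts to observing that the map $(m',k)\mapsto(m',k)$ is a bijection between $\{(m',k):|m'|=N-k,\,0\le k\le N\}$ and $\{m\in\mathbb{N}^n:|m|=N\}$.
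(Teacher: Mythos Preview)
Your proof is correct. The paper does not supply its own proof of this lemma at all; it simply quotes the multinomial formula as a known identity from \cite{ZKH}, so there is nothing to compare against beyond noting that your induction-on-$n$ argument via the binomial theorem is a standard and complete derivation.
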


\section{Proof of the theorems}\label{proof}
Now we are ready to prove Theorems \ref{theorem1}, \ref{example 1}, \ref{example 2} and \ref{example 3}.

\begin{proof}[\textbf{Proof of Theorem $\mathbf{\ref{theorem1}}$}]
		From Lemma \ref{PowerSeries}, for the kernel $K_{w}(z)=K(z,w)\in \mathcal{A}^2_\varphi$, $D=\left\{z: \left|z_{j}\right|<r_{j}\right\}\subset \Omega$, there exists one (and only one) power series such that
		$$	
		K_{w}(z)=\sum_{\alpha \in \mathbb N^n}C_\alpha(K_{w}(z))z^{\alpha}
		$$	
		with normal convergence in $\Omega$, where
		$$
		{C_\alpha(K_{w})}
		=\frac{1}{(2 \pi i)^{n} } \int_{|\zeta_{1}|=\rho_1}\cdots\int_{|\zeta_{n}|=\rho_n} \frac{K_{w}\left(\zeta_{1}, \cdots, \zeta_{n}\right)}{\prod_{j=1}^{n} \zeta_{j}^{\alpha_{j}+1}}  d \zeta_{1} \cdots d \zeta_{n},
		$$
		$0<\rho_1<r_1, \cdots, 0<\rho_n<r_n$.
		
		Lemma \ref{lem1} claims that the transform $T:F \longmapsto \{C_\alpha(F)\}_{\alpha\in \mathbb N^n}$ is an isometry from $\mathcal{A}^2_\varphi$ to $l^2_I$ preserving the Hilbert space norms. Using the polarization identity of $\|F\|_{\mathcal{A}^2_\varphi}=\|T(F)\|_{l_I^2}$, it then follows that the inner product is also preserved. Hence we have
		\begin{equation*}
	F(w)=\langle F,K_{w}\rangle_{\varphi}=\langle T(F),T(K_{w})\rangle_{I}
	=\sum_{\alpha\in \mathbb N^n}C_\alpha(F) \overline{C_\alpha(K_{w})} I(\alpha).
\end{equation*}
		On the other hand,
		$$	
		F(w)=\sum_{\alpha \in \mathbb N^n}C_\alpha(F)w^{\alpha}.	
		$$
 Therefore 
		$$
		\sum_{\alpha\in \mathbb N^n}C_\alpha(F)\left( \overline{C_\alpha(K_{w})} I(\alpha)-w^{\alpha}\right)=0
		$$
		holds for every $F\in \mathcal{A}^2_\varphi$. In particular, for any $\alpha \in \mathbb N^n$, let $F_{\alpha}(w)=w^{\alpha}$, then $$C_\beta(F_{\alpha})=\left\{
		\begin{array}{rcl}
			1,       &      & {\alpha=\beta,}\\
			0,     &      & {\alpha\neq\beta,}
		\end{array} \right.$$
		 which implies that $\overline{C_\alpha(K_{w})} I(\alpha)=w^{\alpha}$. Then, 	
		 \begin{equation}
		 	C_\alpha(K_{w})=I^{-1}(\alpha)\overline{w}^{\alpha}.\label{C}
		 \end{equation}
		Hence,
		\begin{equation*}
			K_{w}(z)=K(z,w)
			=\sum_{\alpha \in \mathbb N^n}C_\alpha(K_{w})z^{\alpha}
			=\sum_{\alpha\in \mathbb N^n} I^{-1}(\alpha)z^{\alpha}\overline{w}^{\alpha}.
		\end{equation*}
	Note that the Bergman kernel is uniquely characterized by the following three properties \cite[Proposition 1.1.6.]{SGK1}:\\
	(i) $K(z,w)=\overline{K(w,z)}$  for all $z,w\in \Omega$;\\
	(ii) $K(z,w)$ reproduces every element in $\mathcal{A}^2_\varphi(\Omega)$ in the following sense \begin{equation*}
		F(z)=\int_\Omega K(z,w)F(w)\varphi(|w_1|,|w_2|,\cdots,|w_n|)dV(w),
	\end{equation*}
	for every $F\in \mathcal{A}^2_\varphi$;\\
	(iii)$K_{w}\in \mathcal{A}^2_\varphi$ for all $w\in \Omega$, where $K_{w}(z)=K(z,w)$.
	
	Now we prove that (\ref{kernelform}) admits these properties.
	We first prove the equation in (i).
	\begin{eqnarray*}
		\overline{K(w,z)}&=&\overline{\sum_{\alpha\in \mathbb N^n} I^{-1}(\alpha)w^{\alpha}\overline{z}^{\alpha}}\\
		&=&\sum_{\alpha\in \mathbb N^n} I^{-1}(\alpha)z^{\alpha}\overline{w}^{\alpha}=K(z,w),
	\end{eqnarray*}
	which means that (i) holds for the Bergman kernel in the form of (\ref{kernelform}). We then show that $K(z,w)$ reproduces every element in $\mathcal{A}^2_\varphi$. For $F(z),K_{w}(z)\in \mathcal{A}^2_\varphi$,
	the polarization identity and (\ref{C}) implies that
	\begin{eqnarray*}
	\langle F,K_{w}\rangle_{\mathcal{A}^2_{\varphi}}
	&=&\left \langle T(F),T(K_{w})\right \rangle_{l^2_I}=\sum_{\alpha \in \mathbb N^n}C_\alpha(F)I^{-1}(\alpha)w^{\alpha}I(\alpha)\\
	&=&\sum_{\alpha \in \mathbb N^n}C_\alpha(F)w^{\alpha}
	=F(w).
\end{eqnarray*}
	Hence, the second property is proved.
	
	Finally, for fixed $w_0=(r_1e^{i\theta_1},\cdots,r_ne^{i\theta_n})\in \Omega$, we prove that $K_{w_0}(z)\in \mathcal{A}^2_\varphi(\Omega) $.    $\tilde{\Omega}$ is the Reinhardt shadow of $\Omega$. There exists $\delta>0$ such that
    $r_0:=(r_1, \cdots, r_n)\in P_{\delta}\subset \tilde{\Omega}$, where $P_{\delta}=[0,\delta]\times \cdots \times [0,\delta]\subset\mathbb R^n$.
Let $\varepsilon=\min\{\varphi(r):r\in P_{\delta}\}>0$, then,
	\begin{equation*}
		I(\alpha)=(2\pi)^n\int_{\tilde{\Omega}}r^{2\alpha+\mathbbm{1}_n}\varphi(r)dr
		\geq (2\pi)^n\varepsilon\int_{P_{\delta}}r^{2\alpha+\mathbbm{1}_n}dr
		=(2\pi)^n\varepsilon\prod_{k=1}^n\frac{\delta^{2\alpha_k+2}}{2\alpha_k+2}.
	\end{equation*}
	Therefore, again by the polarization identity and (\ref{C}),
	\begin{eqnarray*}
		\langle K_{w_0},K_{w_0}\rangle_{\mathcal{A}^2_{\varphi}}
		&=&\left \langle T(K_{w}),T(K_{w})\right \rangle_{l^2_I}
		=\sum_{\alpha \in \mathbb N^n}\left|\frac{\overline{w_0}^{\alpha}}{I(\alpha)}\right |^2I(\alpha)
		=\sum_{\alpha \in \mathbb N^n}I^{-1}(\alpha)|w_0^{\alpha}|^2\\
		&\leq&\frac{1}{(2\pi)^n\varepsilon}\sum_{\alpha \in \mathbb N^n}\frac{|r_1|^{2\alpha_{1}}\cdots|r_n|^{2\alpha_{n}}}{\prod_{k=1}^n\frac{\delta^{2\alpha_k+2}}{2\alpha_k+2}} <\infty.
	\end{eqnarray*}
	Therefore, $K_{w_0}(z)\in \mathcal{A}^2_\varphi$ for $z \in \Omega$.
	\end{proof}
	
	\begin{proof}[\textbf{Proof of Theorem $\mathbf{\ref{example 1}}$}]
	 We first compute I($\alpha$) as follows:
	 	\begin{eqnarray*}
	 I(\alpha)=(2\pi)^n\int_{\tilde{\Omega}}r^{2\alpha+\mathbbm{1}_n}\varphi(r)dr,
	  	\end{eqnarray*}
	 where $\tilde{\Omega}= \mathbb{R}_{+}^n:=\{r=(r_1,\cdots,r_n):r_j\geq0,j=1,2,\cdots,n\}$ is the Reinhardt shadow of $\mathbb{C}^n$, $\varphi(r)=e^{-\mu_1\|r\|^{\mu_2}}$. Then, from Lemma \ref{lem2}, we obtain
	 	\begin{align}\label{Ialpha}
		I(\alpha)
		&=(2\pi)^n\int_{\tilde{\Omega}}r^{2\alpha+\mathbbm{1}_n}e^{-\mu_1\|r\|^{\mu_2}} dr\nonumber\\
		&=(2\pi)^n\int_0^{\infty}\rho^{2|\alpha|+2n-1}e^{-\mu_1\rho^{\mu_2}}d\rho \cdot  \int_{\mathbb S^{+}_n}\xi^{2\alpha+\mathbbm{1}_n} d\xi\nonumber\\
		&=(2\pi)^n  \int_0^{\infty}\rho^{2|\alpha|+2n-1}e^{-\mu_1\rho^{\mu_2}}d\rho \cdot \frac{1}{2^n}\int_{\mathbb S_n}|\xi_1|^{2\alpha_1+1}\cdots|\xi_n|^{2\alpha_n+1}  d\xi\nonumber\\
		&=\frac{2\pi^n\alpha!}{\Gamma(|\alpha|+n)}\int_0^{\infty}\rho^{2|\alpha|+2n-1}e^{-\mu_1\rho^{\mu_2}}d\rho,
	    \end{align}
	where $\mathbb S^{+}_n=\{\xi \in\mathbb R^n_{+}:\|\xi\|=1\}$. By substituting $t = \mu_1\rho^{\mu_2}$
	 to the last line of (\ref{Ialpha}), we obtain 
		\begin{align*}
		I(\alpha)
		&=\frac{2\pi^n\alpha!}{\Gamma(|\alpha|+n)}\cdot\frac{1}{{\mu_1}^{\frac{2|\alpha|+2n}{\mu_2}}\mu_2}\int_0^{\infty}t^{\frac{2|\alpha|+2n}{\mu_2}-1}e^{-t}d\rho\\
		&=\frac{2\pi^n\alpha!\Gamma(\frac{2|\alpha|+2n}{\mu_2})}{\Gamma(|\alpha|+n){\mu_1}^{\frac{2|\alpha|+2n}{\mu_2}}\mu_2}.
	    \end{align*}
		Then based on the form of the kernel of Theorem \ref{theorem1}, we obtain
	 	\begin{align*}
		K_{\mu_1, \mu_2}(z,w)
		&=\sum_{\alpha\in \mathbb N^n} I^{-1}(\alpha)z^{\alpha}\overline{w}^{\alpha}
		=\sum_{\alpha\in \mathbb N^n}
		\frac{\Gamma(|\alpha|+n){\mu_1}^{\frac{2|\alpha|+2n}{\mu_2}}\mu_2}{2\pi^n\alpha!\Gamma(\frac{2|\alpha|+2n}{\mu_2})}z^{\alpha}\overline{w}^{\alpha}\\
		&=\frac{\mu_2}{2\pi^n}\sum_{k=0}^\infty\sum_{|\alpha|=k}
		\frac{\Gamma(|\alpha|+n){\mu_1}^{\frac{2|\alpha|+2n}{\mu_2}}}{\alpha!\Gamma(\frac{2|\alpha|+2n}{\mu_2})}z^{\alpha}\overline{w}^{\alpha}\\
		&=\frac{\mu_2}{2\pi^n}\sum_{k=0}^\infty\frac{\Gamma(k+n){\mu_1}^{\frac{2k+2n}{\mu_2}}}{\Gamma(\frac{2k+2n}{\mu_2})k!}\sum_{|\alpha|=k}\frac{k!}{\alpha!}z^{\alpha}\overline{w}^{\alpha}.
		  \end{align*}
	According to (\ref{equation lemma3 multiindex}), we can know that $\sum_{|\alpha|=k}\frac{k!}{\alpha!}z^{\alpha}\overline{w}^{\alpha}=\langle z,w \rangle^k$, then 
	  \begin{align*}
		K_{\mu_1, \mu_2}(z,w)
		&=\frac{\mu_2}{2\pi^n}\sum_{k=0}^\infty\frac{\Gamma(k+n){\mu_1}^{\frac{2k+2n}{\mu_2}}}{\Gamma(\frac{2k+2n}{\mu_2})k!}\langle z,w \rangle^k.
	    \end{align*}
		\end{proof}

\begin{proof}[\textbf{Proof of Theorem $\mathbf{\ref{example 2}}$}]
	From the definition of the region, we can see that
	$\tilde{\Omega}=\{(r, \rho)\in \mathbb R_{+}^n \times \mathbb{R}_{+}^m : \|\rho\|^2 <e^{-\mu_1\|r\|^{\mu_2}}
	\}$ is the Reinhardt shadow of $D_{n,m}$, $\varphi(r, \rho)=(e^{-\mu_1\|r\|^{\mu_2}}-\|\rho\|^2)^\eta$. Then
	\begin{align}\label{Iab}
		&~~I(\alpha,\beta)\\
		=&(2\pi)^{n+m}\int_{\tilde{\Omega}}r^{2\alpha+\mathbbm{1}_n}\rho^{2\beta+\mathbbm{1}_m}\varphi(r,\rho)drd\rho  \nonumber\\
		=&\frac{(2\pi)^{n+m}}{2^{n+m}}\int_{\mathbb{R}^n}\int_{\|\rho\|^2 <e^{-\mu_1\|r\|^{\mu_2}}}|r_1|^{2\alpha_1+1}\cdots|r_n|^{2\alpha_n+1}|\rho_1|^{2\beta_1+1}\cdots|\rho_m|^{2\beta_m+1}\varphi(r,\rho)d\rho dr   \nonumber\\
		=&\pi^{n+m}\int_{\mathbb{R}^n}|r_1|^{2\alpha_1+1}\cdots|r_n|^{2\alpha_n+1}\int_{\|\rho\|^2 <e^{-\mu_1\|r\|^{\mu_2}}}|\rho_1|^{2\beta_1+1}\cdots|\rho_m|^{2\beta_m+1}\varphi(r,\rho)d\rho dr.\nonumber
	\end{align}	
	Let us first calculate the integral inside:
	\begin{align*}
		I_1(r)
		&:=\int_{\|\rho\|^2 <e^{-\mu_1\|r\|^{\mu_2}}}|\rho_1|^{2\beta_1+1}\cdots|\rho_m|^{2\beta_m+1}({e}^{-\mu_1\|r\|^{\mu_2}}-\|\rho\|^2)^\eta d\rho\\
		&=\int_{0}^{\sqrt{e^{-\mu_1\|r\|^{\mu_2}} }}\tilde{\rho}^{2|\beta|+2m-1}({e}^{-\mu_1\|r\|^{\mu_2}}-\tilde{\rho}^2)^\eta d\tilde{\rho} \int_{\mathbb S_m}|\xi_1|^{2\alpha_1+1}\cdots|\xi_m|^{2\alpha_m+1} d\xi.
	\end{align*}
	Let $\tilde{\rho}=\sqrt{{e}^{-\mu_1\|r\|^{\mu_2}}}t,$ and note that
	\begin{align*}
		&~~\int_{0}^{\sqrt{{e}^{-\mu_1\|r\|^{\mu_2}} }}\tilde{\rho}^{2|\beta|+2m-1}({e}^{-\mu_1\|r\|^{\mu_2}}-\tilde{\rho}^2)^\eta d\tilde{\rho}\\
		=&e^{-\mu_1(|\beta|+m+\eta)\|r\|^{\mu_2}}\int_{0}^{1}t^{2|\beta|+2m-1}(1-t^2)^\eta dt\\
		=&e^{-\mu_1(|\beta|+m+\eta)\|r\|^{\mu_2}}\frac{B(|\beta|+m,\eta+1)}{2}.
	\end{align*}
	Now, using equation (\ref{equation lemma2 sphere integral}), we see that
	\begin{align*}
		I_1(r)
		&=e^{-\mu_1(|\beta|+m+\eta)\|r\|^{\mu_2}}\frac{B(|\beta|+m,\eta+1)}{2}\frac{2\beta!}{\Gamma(|\beta|+m)}\\
		&=\frac{\Gamma(\eta+1)\beta!}{\Gamma(|\beta|+m+\eta+1)}e^{-\mu_1(|\beta|+m+\eta)\|r\|^{\mu_2}}.
	\end{align*}
	With this formula, we see immediately that
	\begin{align*}
		&~~I(\alpha,\beta)\\
		=&\frac{\Gamma(\eta+1)\beta!\pi^{n+m}}{\Gamma(|\beta|+m+\eta+1)}\int_{\mathbb{R}^n}|r_1|^{2\alpha_1+1}\cdots|r_n|^{2\alpha_n+1}e^{-\mu_1(|\beta|+m+\eta)\|r\|^{\mu_2}}dr\\
		=&\frac{\Gamma(\eta+1)\beta!\pi^{n+m}}{\Gamma(|\beta|+m+\eta+1)}\int_{0}^{\infty}\tilde{r}^{2|\alpha|+2n-1}e^{-\mu_1(|\beta|+m+\eta)\tilde{r}^{\mu_2}} d\tilde{r} \int_{\mathbb S_n}|\xi_1|^{2\alpha_1+1}\cdots|\xi_n|^{2\alpha_n+1}  d\xi.
	\end{align*}
	Let $t=\mu_1(|\beta|+m+\eta)\tilde{r}^{\mu_2}$, then
	\begin{align*}
		\int_{0}^{\infty}\tilde{r}^{2|\alpha|+2n-1}e^{-\mu_1(|\beta|+m+\eta)\tilde{r}^{\mu_2}} d\tilde{r}
		&=\frac{1}{\mu_2[\mu_1(|\beta|+m+\eta)]^{\frac{2|\alpha|+2n}{\mu_2}}}\int_{0}^{\infty}t^{\frac{2|\alpha|+2n}{\mu_2}-1}e^{-t}dt\\
		&=\frac{\Gamma(\frac{2|\alpha|+2n}{\mu_2})}{\mu_2[\mu_1(|\beta|+m+\eta)]^{\frac{2|\alpha|+2n}{\mu_2}}}.
	\end{align*}
	Once again, using equation (\ref{equation lemma2 sphere integral}), we get
	\begin{align*}	
		I(\alpha,\beta)=\frac{2\alpha!\beta!\Gamma(\eta+1)\Gamma(\frac{2|\alpha|+2n}{\mu_2})\pi^{n+m}}{\mu_2\Gamma(|\beta|+m+\eta+1)\Gamma(|\alpha|+n)[\mu_1(|\beta|+m+\eta)]^{\frac{2|\alpha|+2n}{\mu_2}}}.
	\end{align*}
	Finally, according to Theorem \ref{theorem1},
	\begin{align*}
		&~~K_{\mathcal{A}^2(D_{n,m},\varphi^{\eta})}((z,w),(s,t))\\
		=&\sum_{\alpha\in \mathbb N^n, \beta\in \mathbb N^m} I^{-1}(\alpha,\beta)z^{\alpha}w^{\beta}\overline{{s}^{\alpha}t^{\beta}}\\
		=&\sum_{\alpha\in \mathbb N^n, \beta\in \mathbb N^m}
		\frac{\mu_2\Gamma(|\beta|+m+\eta+1)\Gamma(|\alpha|+n)[\mu_1(|\beta|+m+\eta)]^{\frac{2|\alpha|+2n}{\mu_2}}}{2\pi^{n+m}\alpha!\beta!\Gamma(\eta+1)\Gamma(\frac{2|\alpha|+2n}{\mu_2})}z^{\alpha}w^{\beta}\overline{{s}^{\alpha}t^{\beta}}\\
		=&\sum_{\beta\in \mathbb N^m}
		\frac{\mu_2\Gamma(|\beta|+m+\eta+1)[\mu_1(|\beta|+m+\eta)]^{\frac{2|\alpha|+2n}{\mu_2}}w^{\beta}\overline{t}^{\beta}}{2\pi^{n+m}\beta!\Gamma(\eta+1)}
		\sum_{k_1=0}^{\infty}\frac{\Gamma(k_1+n)}{\Gamma(\frac{2k_1+2n}{\mu_2})k_1!}\sum_{|\alpha|=k_1}\frac{k_1!}{\alpha!}z^{\alpha}\overline{s}^{\alpha}\\
		=&\sum_{k_1=0}^{\infty}\frac{\mu_2\Gamma(k_1+n)[\mu_1(|\beta|+m+\eta)]^{\frac{2|\alpha|+2n}{\mu_2}}}{2\pi^{n+m}\Gamma(\frac{2k_1+2n}{\mu_2})k_1!\Gamma(\eta+1)}\langle z,s \rangle^{k_1}
		\sum_{\beta\in \mathbb N^m}
		\frac{\Gamma(|\beta|+m+\eta+1)w^{\beta}\overline{t}^{\beta}}{\beta!}\\
		=&\sum_{k_1=0}^{\infty}\sum_{k_2=0}^{\infty}
		\frac{\mu_2\Gamma(k_2+m+\eta+1)\Gamma(k_1+n)[\mu_1(k_2+m+\eta)]^{\frac{2k_1+2n}{\mu_2}}}{2\pi^{n+m}k_1!k_2!\Gamma(\eta+1)\Gamma(\frac{2k_1+2n}{\mu_2})}\langle z,s \rangle^{k_1} \langle w,t \rangle^{k_2}.
	\end{align*}
\end{proof}

\begin{proof}[\textbf{Proof of Theorem $\mathbf{\ref{example 3}}$}]
Let $\tilde{\Omega}=\left\{(r, r', \rho)\in \mathbb R_{+}^n \times \mathbb R_{+}^m \times\mathbb{R}_{+} : \sum_{j=1}^{n} e^{\eta_{j}\rho^{2}}r_{j}^{2}+\|r'\|^{2} <1
\right\}$ be the Reinhardt shadow of $V_{\eta}$. Put $\varphi(r, r', \rho)=(1-\sum_{j=1}^{n} e^{\eta_{j}\rho^{2}}r_{j}^{2}-\|r'\|^{2})^a$. Then,
\begin{align*}
	&~~I(\alpha,\beta,\gamma)\\
=&(2\pi)^{n+m+1}\int_{\tilde{\Omega}}r^{2\alpha+\mathbbm{1}_n}{r'}^{2\beta+\mathbbm{1}_m}\rho^{2\gamma+1}\varphi(r,r',\rho)drdr'd\rho  \nonumber\\
=&(2\pi)^{n+m+1}\int_{0}^{\infty}
\int_{\mathcal{H}}
	r^{2\alpha+\mathbbm{1}_n}{r'}^{2\beta+\mathbbm{1}_m}\varphi(r,r',\rho)drdr'\rho^{2\gamma+1}d\rho,
\end{align*}	
where $\mathcal{H}={\{(r,r')\in \mathbb R_{+}^n \times \mathbb R_{+}^m : \sum_{j=1}^{n} e^{\eta_{j}\rho^{2}}r_{j}^{2}+\|r'\|^{2} <1
	\}}$.
Let us first calculate the integral inside as follows:
\begin{align}\label{I1}
	I_1(\rho)
	&:=\int_{\mathcal{H}}
	r_{1}^{2\alpha_1+1}\cdots r_{n}^{2\alpha_n+1}{r'}^{2\beta+\mathbbm{1}_m}(1-\sum_{j=1}^{n} e^{\eta_{j}\rho^{2}}r_{j}^{2}-\|r'\|^{2})^adrdr'.
\end{align}	
Performing variable substitution on (\ref{I1}) with $t_j=e^{\frac{\eta_{j}\rho^{2}}{2}}r_{j}$, $j=1,\cdots,n$. Then
\begin{align*}
	&~~I_1(\rho)\\
	=&\int_{\mathbb{B}_{n+m}^{+}}
	e^{-\frac{\rho^2}{2}(2\langle\alpha , \eta\rangle +|\eta|)}t^{2\alpha+\mathbbm{1}_n}{r'}^{2\beta+\mathbbm{1}_m}(1-\|t\|^{2}-\|r'\|^{2})^ae^{-\frac{\rho^2}{2}|\eta|}dtdr'\\
	=&e^{-\rho^2(\langle\alpha , \eta\rangle +|\eta|)}\int_{\mathbb{B}_{n+m}^{+}}
	t^{2\alpha+\mathbbm{1}_n}{r'}^{2\beta+\mathbbm{1}_m}(1-\|t\|^{2}-\|r'\|^{2})^adtdr'
	=C_{\alpha,\beta,a} \cdot e^{-\rho^2(\langle\alpha , \eta\rangle +|\eta|)},
\end{align*}
where $\mathbb B^{+}_{n+m}=\{(x, y)\in\mathbb R_{+}^n \times \mathbb R_{+}^m:\|x\|^2+\|y \|^2<1\}$ and
\begin{align*}
    &~~C_{\alpha,\beta,a}\\
    =&\int_{\mathbb{B}_{n+m}^{+}}
    t^{2\alpha+\mathbbm{1}_n}{r'}^{2\beta+\mathbbm{1}_m}(1-\|t\|^{2}-\|r'\|^{2})^adtdr'\\
	=&\int_0^{1}x^{2(|\alpha|+|\beta|+n+m)-1}(1-x^2)^a dx \int_{\mathbb S^{+}_{n+m}}\xi^{2\alpha+\mathbbm{1}_n}
	\zeta^{2\beta+\mathbbm{1}_m} d\xi d\zeta\\
	=&\frac{B(|\alpha|+|\beta|+n+m, a+1)}{2^{n+m+1}}
	 \int_{\mathbb S_{n+m}}|\xi_1|^{2\alpha_1+1}\cdots|\xi_n|^{2\alpha_n+1}|\zeta_1|^{2\beta_1+1}\cdots|\zeta_n|^{2\beta_m+1}  d\xi d\zeta\\
	=&\frac{\Gamma(|\alpha|+|\beta|+n+m)\Gamma(a+1)}{2^{n+m+1}\Gamma(|\alpha|+|\beta|+n+m+a+1)}\cdot
	\frac{2\alpha!\beta!}{\Gamma(|\alpha|+|\beta|+n+m)}\\
	=&\frac{\Gamma(a+1)\alpha!\beta!}{2^{n+m}\Gamma(|\alpha|+|\beta|+n+m+a+1)}.
\end{align*}
Next, putting $s=\rho^2(\langle\alpha , \eta\rangle +|\eta|)$,  we obtain
 \begin{align*}
 	I(\alpha,\beta,\gamma)
 	&=(2\pi)^{n+m+1}C_{\alpha,\beta,a}\int_{0}^{\infty}\rho^{2\gamma+1}e^{-\rho^2(\langle\alpha , \eta\rangle +|\eta|)}d\rho\\
 	&=\frac{(2\pi)^{n+m+1}C_{\alpha,\beta,a}}{2(\langle\alpha , \eta\rangle+|\eta|)^{\gamma+1}}\int_{0}^{\infty}s^{\gamma}e^{-s}ds  \\
 	&=\frac{\pi^{n+m+1}\Gamma(a+1)\alpha!\beta!\gamma!}{\Gamma(|\alpha|+|\beta|+n+m+a+1)(\langle\alpha , \eta\rangle+|\eta|)^{\gamma+1}}.
 \end{align*}
Now we can compute the formula of weighted Bergman kernel. For the convenience of writing, we write $n+m+a+1$ as $\kappa$, then according to  the
representation form of reproducing kernel (\ref{kernelform}),
	\begin{align}\label{kernel_ex2}
	&~~K_{\mathcal{A}^2(V_{\eta},\varphi^{a})}((z,z',w),(s,s',t))\\
	=&\sum_{\alpha\in \mathbb N^n}\sum_{\beta\in \mathbb N^m}\sum_{\gamma=0}^{\infty} I^{-1}(\alpha,\beta,\gamma)z^{\alpha}{z'}^{\beta}w^{\gamma}\overline{{s}^{\alpha}{s'}^{\beta}t^{\gamma}}\nonumber\\
	=&\frac{1}{\pi^{n+m+1}a!}
	\sum_{\alpha\in \mathbb N^n}\sum_{\beta\in \mathbb N^m}\sum_{\gamma=0}^{\infty}
	\frac{\Gamma(|\alpha|+|\beta|+\kappa)(\langle\alpha , \eta\rangle+|\eta|)^{\gamma+1}}{\alpha!\beta!\gamma!}z^{\alpha}{z'}^{\beta}w^{\gamma}\overline{{s}^{\alpha}{s'}^{\beta}t^{\gamma}}\nonumber\\
	=&\frac{1}{\pi^{n+m+1}a!}
	\sum_{\alpha\in \mathbb N^n}\sum_{\beta\in \mathbb N^m}\frac{\Gamma(|\alpha|+|\beta|+\kappa)(\langle\alpha ,\eta\rangle+|\eta|)}{\alpha!\beta!}z^{\alpha}{z'}^{\beta}\overline{{s}^{\alpha}{s'}^{\beta}}\sum_{\gamma=0}^{\infty}\frac{(\langle\alpha , \eta\rangle+|\eta|)^{\gamma}}{\gamma!}w^{\gamma}\overline{t}^{\gamma}\nonumber\\
	=&\frac{1}{\pi^{n+m+1}a!}\sum_{\alpha\in \mathbb N^n}\sum_{\beta\in \mathbb N^m}
	\frac{\Gamma(|\alpha|+|\beta|+\kappa)(\langle\alpha , \eta\rangle+|\eta|)e^{(\langle\alpha , \eta\rangle+|\eta|)w\bar{t}}}{\alpha!\beta!}z^{\alpha}{z'}^{\beta}\overline{{s}^{\alpha}{s'}^{\beta}}\nonumber\\
	=&\frac{1}{\pi^{n+m+1}a!}\sum_{\alpha\in \mathbb N^n}
	\frac{(\langle\alpha , \eta\rangle+|\eta|)e^{(\langle\alpha , \eta\rangle+|\eta|)w\bar{t}}}{\alpha!}z^{\alpha}\overline{{s}^{\alpha}}
	\sum_{\beta\in \mathbb N^m}\frac{\Gamma(|\alpha|+|\beta|+\kappa){z'}^{\beta}\overline{s'}^{\beta}}{\beta!}.\nonumber
    \end{align}
Using formula (\ref{equation lemma3 multiindex}), we have
\begin{align*}
	\sum_{\beta\in \mathbb N^m}\frac{\Gamma(|\alpha|+|\beta|+\kappa){z'}^{\beta}\overline{s'}^{\beta}}{\beta!}
	=&\sum_{l=0}^{\infty}\frac{\Gamma(|\alpha|+l+\kappa)}{l!}
	\sum_{|\beta|=l}
	\frac{l!}{\beta!}{z'}^{\beta}\overline{{s'}^{\beta}}\\
	=&\sum_{l=0}^{\infty}\frac{\Gamma(|\alpha|+l+\kappa)}{l!}\langle z', s' \rangle^l.
\end{align*}
Therefore
\begin{align*}
(\ref{kernel_ex2})&=\frac{1}{\pi^{n+m+1}\Gamma(a+1)}
\sum_{l=0}^{\infty}\frac{\langle z', s' \rangle^l}{l!}
\sum_{\alpha\in \mathbb N^n}
\frac{\Gamma(|\alpha|+l+\kappa)(\langle\alpha , \eta\rangle+|\eta|)e^{(\langle\alpha , \eta\rangle+|\eta|)w\bar{t}}}{\alpha!}z^{\alpha}\overline{{s}^{\alpha}},
\end{align*}
where $\kappa=n+m+a+1$.
To calculate $K_{\mathcal{A}^2(V_{\eta},\varphi^{a})}((z,z',w),(s,s',t))$, we split the sum into two pieces:
    \begin{align*}
    \begin{split}    		
	&~~K_{\mathcal{A}^2(V_{\eta},\varphi^{a})}((z,z',w),(s,s',t))\\
	=&\frac{e^{|\eta| w\bar{t}}}{\pi^{n+m+1}\Gamma(a+1)} \left({\sum_{l=0}^{\infty}\frac{\langle z', s' \rangle^l}{l!}\sum_{\alpha\in \mathbb N^n}
	 \frac{\Gamma(|\alpha|+l+\kappa)\langle\alpha , \eta\rangle e^{\langle\alpha , \eta\rangle w\bar{t}}}{\alpha!}z^{\alpha}\overline{{s}^{\alpha}}}\right.\\
	 &\qquad \left.{+\sum_{l=0}^{\infty}\frac{\langle z', s' \rangle^l}{l!}\sum_{\alpha\in \mathbb N^n}
	\frac{\Gamma(|\alpha|+l+\kappa)|\eta| e^{\langle\alpha , \eta\rangle w\bar{t}}}{\alpha!}z^{\alpha}\overline{{s}^{\alpha}} }\right) \\
	=&:\tilde{C}_{\eta,a}(I_2+I_3),
	\end{split}
    \end{align*}
where $\tilde{C}_{\eta,a}=\frac{e^{|\eta| w\bar{t}}}{\pi^{n+m+1}\Gamma(a+1)}$. Sums $I_2$ and $I_3$ remain to be computed. Then, for the first piece, 
	\begin{align*}
I_2
    =&\sum_{l=0}^{\infty}\frac{\langle z', s' \rangle^l}{l!}\sum_{\alpha\in \mathbb N^n}
	\frac{\Gamma(|\alpha|+l+\kappa)\langle\alpha , \eta\rangle e^{\langle\alpha , \eta\rangle w\bar{t}}}{\alpha!}z^{\alpha}\overline{{s}^{\alpha}}\\
	=&\sum_{l=0}^{\infty}\frac{\langle z', s' \rangle^l}{l!}\sum_{k=1}^{\infty}\frac{\Gamma(k+l+\kappa)}{k!}\sum_{|\alpha|=k}
	\frac{k!\langle\alpha , \eta\rangle e^{\langle\alpha , \eta\rangle w\bar{t}}}{\alpha!}z^{\alpha}\overline{{s}^{\alpha}}	\\
	=&\sum_{l=0}^{\infty}\frac{\langle z', s' \rangle^l}{l!}\sum_{k=1}^{\infty}\frac{\Gamma(k+l+\kappa)}{k!}\sum_{j=1}^{n}\eta_jz_j\frac{\partial}{\partial z_j}\left(\sum_{|\alpha|=k}
	\frac{k! e^{\langle\alpha , \eta\rangle w\bar{t}}}{\alpha!}z^{\alpha}\overline{{s}^{\alpha}}	\right) .
    \end{align*}
 We will write $\zeta_j=e^{\eta _jw\bar{t}}z_j\overline{s}_j$ for simplicity. According to (\ref{equation lemma3 multiindex}), we can know that $\sum_{|\alpha|=k}
 \frac{k! e^{\langle\alpha , \eta\rangle w\bar{t}}}{\alpha!}z^{\alpha}\overline{{s}^{\alpha}}=(\zeta_1+\cdots+ \zeta_n)^{k}$, then 
\begin{align}\label{SUM2}	
I_2
	=&\langle\zeta , \eta\rangle
	\sum_{l=0}^{\infty}\frac{\langle z', s' \rangle^l}{l!}\sum_{k=1}^{\infty}\frac{\Gamma(k+l+\kappa)}{(k-1)!}(\zeta_1+\cdots+ \zeta_n)^{k-1}\\
	=&\langle\zeta , \eta\rangle
	\sum_{l=0}^{\infty}\frac{\langle z', s' \rangle^l}{l!} \Gamma(l+\kappa+1)
	\sum_{k=0}^{\infty}\frac{\Gamma(k+l+\kappa+1)}{k!\Gamma(l+\kappa+1)}(\zeta_1+\cdots+ \zeta_n)^{k}\nonumber\\
	=&\langle\zeta , \eta\rangle \sum_{l=0}^{\infty}\frac{\langle z', s' \rangle^l}{l!}\Gamma(l+\kappa+1)\frac{1}{(1-|\zeta|)^{l+\kappa+1}}\nonumber\\
	=&\frac{\langle\zeta , \eta\rangle \Gamma(\kappa+1)}{(1-|\zeta|)^{\kappa+1}} \sum_{l=0}^{\infty}\frac{\Gamma(l+\kappa+1)}{l!\Gamma(\kappa+1)}\left(\frac{\langle z', s' \rangle}{1-|\zeta|}\right)^{l}\nonumber\\
	=&\frac{\langle\zeta , \eta\rangle \Gamma(\kappa+1)}{(1-|\zeta|)^{\kappa+1}} \frac{1}{(1-\frac{\langle z', s' \rangle}{1-|\zeta|})^{\kappa+1}}\nonumber\\
	=&\frac{\langle\zeta , \eta\rangle \Gamma(\kappa+1)}{(1-|\zeta|-\langle z', s' \rangle)^{\kappa+1}},\nonumber
    \end{align}
where $|\zeta|=\zeta_1+\cdots+ \zeta_n=\sum_{j=1}^{n} e^{\eta_{j} w \bar{t}} z_{j} \bar{s}_{j}$. Summation of $I_3$ is straightforward:
\begin{eqnarray}\label{SUM3}
I_3
	&=&\sum_{l=0}^{\infty}\frac{\langle z', s' \rangle^l}{l!}\sum_{\alpha\in \mathbb N^n}
	\frac{\Gamma(|\alpha|+l+\kappa)|\eta| e^{\langle\alpha , \eta\rangle w\bar{t}}}{\alpha!}z^{\alpha}\overline{{s}^{\alpha}} \\
	&=&|\eta|\sum_{l=0}^{\infty}\frac{\langle z', s' \rangle^l}{l!}\sum_{\alpha\in \mathbb N^n}
	\frac{\Gamma(|\alpha|+l+\kappa)}{\alpha!}\zeta^{\alpha}\nonumber\\	
	&=&|\eta|\sum_{l=0}^{\infty}\frac{\langle z', s' \rangle^l}{l!}\sum_{k=0}^{\infty}
	\frac{\Gamma(k+l+\kappa)}{k!}|\zeta|^{k}\nonumber\\
	&=&|\eta|\sum_{l=0}^{\infty}\frac{\langle z', s' \rangle^l}{l!}\frac{\Gamma(l+\kappa)}{(1-|\zeta|)^{l+\kappa}}\nonumber\\
	&=&\frac{|\eta|\Gamma(\kappa)}{(1-|\zeta|)^{\kappa}}\sum_{l=0}^{\infty}\frac{\Gamma(l+\kappa)}{l!\Gamma(\kappa)}\left(\frac{\langle z', s' \rangle}{1-|\zeta|}\right)^{l}\nonumber\\
	&=&\frac{|\eta|\Gamma(\kappa)}{(1-|\zeta|)^{\kappa}}\frac{1}{(1-\frac{\langle z', s' \rangle}{1-|\zeta|})^{\kappa}}
	=\frac{|\eta|\Gamma(\kappa)}{(1-|\zeta|-\langle z', s' \rangle)^{\kappa}}.	\nonumber
\end{eqnarray}
Combining (\ref{SUM2}) with (\ref{SUM3}) and noting $\kappa=n+m+a+1$, we now have
 	\begin{align*}
	&~K_{\mathcal{A}^2(V_{\eta},\varphi^{a})}((z,z',w),(s,s',t))\\
	=&\frac{e^{|\eta| w\bar{t}}}{\pi^{n+m+1}\Gamma(a+1)}\left(\frac{\Gamma(n+m+a+2) \sum_{j=1}^{n} \eta_{j} e^{\eta_{j} w \bar{t}} z_{j} \bar{s}_{j}}{\phi^{n+m+a+2}}+\frac{|\eta|\Gamma(n+m+a+1)}{\phi^{n+m+a+1}}\right),
    \end{align*}
where $\phi\left(z, z', w ; s, s', t\right)=1-\sum_{j=1}^{n} e^{\eta_{j} w \bar{t}} z_{j} \bar{s}_{j}-\langle z', s' \rangle$.
	\end{proof}




\end{document}